\newtheorem{theorem}{Theorem}
\newtheorem*{theorem*}{Theorem}
\newtheorem{lemma}[theorem]{Lemma}
\newtheorem{corollary}[theorem]{Corollary}
\newtheorem*{corollary*}{Corollary}
\newtheorem{proposition}[theorem]{Proposition}
\newtheorem*{proposition*}{Proposition}
\newtheorem*{claim*}{Claim}
\newtheorem*{thma}{Theorem A}
\newtheorem*{thmaa}{Theorem A$'$}
\newtheorem*{thmb}{Theorem B}
\newtheorem*{thmbb}{Theorem B$'$}
\newtheorem*{facta}{Fact A}
\newtheorem*{factaa}{Fact A$'$}
\theoremstyle{definition}
\newtheorem*{definition}{Definition}
\newtheorem*{definition*}{Definition}
\theoremstyle{remark}
\newtheorem{remark}[theorem]{Remark}
\newtheorem*{remark*}{Remark}
\theoremstyle{plain}
\DeclareMathOperator*{\llimsup}{limsup}
\renewcommand{\limsup}{\llimsup}
\newcommand{\Z}{{\mathbb Z}}
\newcommand{\T}{{\mathbb T}}
\newcommand{\C}{{\mathbb C}}
\newcommand{\D}{{\mathbb D}}
\newcommand{\R}{{\mathbb R}}
\newcommand{\PP}{{\mathbb P}}
\newcommand{\N}{{\mathbb N}}
\newcommand{\Q}{{\mathbb Q}}
\newcommand{\U}{{\mathbb U}}
\newcommand{\CC}{{\mathcal C}}
\newcommand{\CE}{{\mathcal E}}
\newcommand{\CF}{{\mathcal F}}
\newcommand{\CG}{{\mathcal G}}
\newcommand{\CH}{{\mathcal H}}
\newcommand{\CN}{{\mathcal N}}
\newcommand{\CU}{{\mathcal U}}
\newcommand{\CK}{{\mathcal K}}
\newcommand{\CO}{{\mathcal O}}
\begin{document}
\title{Positive Lyapunov Exponents
for Quasiperiodic Szeg\H o Cocycles} \maketitle
\begin{center}
\author{Zhenghe Zhang\\ \footnotesize{Department of Mathematics, Northwestern University}\\
\footnotesize{Email: zhenghe@math.northwestern.edu}}
\end{center}

\fancyhead{}

\fancyhead[CO]{Lyapunov Exponents for Szeg\H o cocycles}

\fancyhead[CE]{Zhenghe Zhang}

\begin{abstract} In this paper we first obtain a formula of averaged Lyapunov
exponents for ergodic Szeg\H o cocycles via the Herman-Avila-Bochi
formula. Then using {\em acceleration}, we construct a class of
analytic quasi-periodic Szeg\H o cocycles with uniformly positive
Lyapunov exponents. Finally, a simple application of the main
theorem in \cite{young} allows us to estimate the Lebesgue measure
of support of the measure associated to certain class of $C^1$
quasiperiodic 2-sided Verblunsky coefficients. Using the same
method, we also recover the \cite{sorets} results for
Schr\"{o}dinger cocycles with nonconstant real analytic potentials
and obtain some nonuniform hyperbolicity results for arbitrarily
fixed Brjuno frequency and for certain $C^1$ potentials.
\end{abstract}

\section{\textbf{Introduction}}
In this paper we study the Lyapunov exponents for two special
families of $SL(2,\C)$ cocycles: Szeg\H o and Schr\"odinger
cocycles. In particular we are interested in how to produce positive
Lyapunov exponents. We first introduce the $SL(2,\C)$ cocycles and
define the associated Lypunov exponents.

\subsection{\textbf{$SL(2,\C)$ cocycles and Lyapunov exponents}}
Let $(X,\mu)$ be a probability space and $T:X\rightarrow X$ be a
$\mu$-preserving transformation. Let $A:X\rightarrow SL(2,\C)$ be a
measurable function satisfying the integrability condition
$$\int_X\ln\|A(x)\|d\mu<\infty.$$ Then we can use $(T, A)$ to define a dynamical
system:
$$(T,A):X\times\C^2\rightarrow X\times\C^2,\ (x,w)\mapsto(T(x),A(x)w).$$
$A$ is the so-called cocycle map. The $n$th iteration of dynamics
will be denoted by $(T, A)^n=(T^n, A_n)$, thus
$$A_n(x)=A(T^{n-1}(x))\cdots A(x), n\geq 1,\ A_0=Id.$$
If furthermore $T$ is invertible, then
$$A_{-n}(x)=A_n(T^{-n}(x))^{-1},\ n\geq 1.$$
One of the most important objects in understanding dynamics of
$SL(2, \C)$ cocycles is the Lyapunov exponent, which is denoted by
$L(T, A)$ and given by
$$\lim\limits_{n\rightarrow\infty}\frac{1}{n}\int_{X}\ln\|A_n(x)\|d\mu
=\inf_n\frac{1}{n}\int_{X}\ln\|A_n(x)\|d\mu\geq 0.$$ The limit
exists and is equal to the infimum since $\{\int_{X}\ln\|A_n(x)\|
d\mu\}_{n\geq1}$ is a subadditive sequence. If in addition $T$ is
$\mu$-ergodic, then by Kingman's subadditive ergodic theorem we also
have
$$L(T, A)=\lim\limits_{n\rightarrow\infty}\frac{1}{n}\ln\|A_n(x)\|,$$
for $\mu$ almost every $x$.

\subsection{\bf{Positive Lyapunov exponents for Schr\"odinger cocycles}}
The Schr\"odinger cocycle map $A^{(E-\lambda v)}:X\rightarrow
SL(2,\R)$ is given by

$$A^{(E-\lambda v)}(x)=
\begin{pmatrix}
E-\lambda v(x)& -1\\
1& 0
\end{pmatrix},$$ where $v:X\rightarrow\R$ is the potential function
(we assume $v\in L^{\infty}(X)$), $E\in\R$ is the energy and
$\lambda$ is the coupling constant. Schr\"odinger cocycles arises
from the one dimensional discrete Schr\"odinger operator $H$ on
$l^2(\Z)$. Let's fix the potential $v$. For $u\in l^{2}(\Z)$, the
Schr\"odinger operator is given by

$$(H_{\lambda,x}u)_n=u_{n+1}+u_{n-1}+\lambda v(T^n(x))u_n.$$

Let $u\in \C^{\Z}$ be a solution of equation $H_{\lambda,x}u=Eu$
(note $u$ is not necessary in $l^2(\Z)$); then the relation between
cocycle and operator is
\[
A^{(E-\lambda v)}(T^n(x)){u_n\choose u_{n-1}}={u_{n+1}\choose u_n}.
\]
Let $\Sigma_{\lambda,x}$ be the spectrum of $H_{\lambda,x}$. That
it,
$$\Sigma_{\lambda,x}=\{E\in\C: H_{\lambda,x}-E \mbox{ is not invertible}\}.$$
Positivity of Lyapunov exponents for Schr\"odinger cocycles are
intensely studied since Lypapunov exponent is very important in
understanding the spectrum of the Schr\"odinger operators. For
potential functions belong to different regularity classes, the
mechanisms lead to positivity of Lyapunov exponents are very
different. We list some of the results which are closely related to
this paper.

\subsubsection{\bf{Continuous potentials}}
We assume $X$ is a compact metric space, $T$ is a homeomorphism
which is also $\mu$-ergodic and $\mu$ is nonatomic. Then
\cite{aviladamanik} (see \cite{aviladamanik} Theorem 1) shows that
there is residual subset $\CG$ of $C(X,\R)$ such that for every
$v\in\CG$, $L(T,A^{(E-v)})>0$ for almost every $E$. What lies behind
this result is actually the monotonicity of the family
$(T,A^{(E-v)})$ with respect to the energy $E$, from which one can
deduce the so-called Kotani Theory (see \cite{kotani2},
\cite{avilakrikorian} or \cite{simon1}). Then one can do the
following steps
\begin{enumerate}
\item $\int^{N}_{-N}L(T,A^{(E-(\cdot))})dE:(L^1(X)\cap
B_{r}(L^{\infty}(X)),\|\cdot\|_1)\rightarrow\R$ is continuous (see
\cite{aviladamanik}, Lemma 1). That is, after some suitable
integration, Lyapunov exponent is a continuous function in $L^1$
sense.
\item the set of simple functions with nonperiodic sequences along orbit
of $T$ is dense in $L^{\infty}(X)$ (see \cite{aviladamanik}, Lemma
2).
\item for $v$ take finitely many values which is
nonperiodic, $L(T,A^{(E-v)})>0$ for almost every $E$ (see
\cite{kotani1}).
\end{enumerate}
(1) and (2) reduces the proof of Theorem 1 in \cite{aviladamanik} to
(3), which is due to the Kotani theory and nondeterminism of
nonperiodic sequences. One can also add a coupling constant since it
doesn't affect the nondeterminism (see \cite{aviladamanik}, Theorem
2). Note positivity of Lyapunov exponents in this case holds only
for a full measure set of energy.

So in this case, it's basically the randomness of potential
functions leads to positivity of Lyapunov exponents.

\subsubsection{\bf{Real analytic potentials}}
To consider higher regularity potentials, we restrict to the case
$(X,\mu,T)=(\R^d/\Z^d,Leb,R_{\alpha})$ and $v\in
C^{r}(\R^d/\Z^d,\R)$, $r\in\Z^+\cup\{\infty,\omega\}$ (here $\Z^+$
is the set of positive integers and $C^{\omega}$ means analyticity),
where $R_{\alpha}:\R^d/\Z^d\rightarrow\R^d/\Z^d$ is the translation
$R_{\alpha}(x)=x+\alpha$. In this case $x$ is the so-called phase
and $\alpha$ is the frequency. For $R_{\alpha}$ ergodic, $v$ is the
so-called quasiperiodic potentials, which is the most intensively
studied.

The first breakthrough, which perhaps is also the most famous one,
is \cite{herman}, where in the case $r=\omega$ and $d=1$ Herman
(among other things) shows that for these $v's$ which are
non-constant trigonometric polynomials, one has $L(\alpha,
A^{(E-\lambda v)})>0$ for all $E$ and all $\alpha$, provided that
$\lambda$ is large, depending only on $v$ (this in fact also holds
for $d>1$). Herman＊s result was later generalized by Sorets and
Spencer \cite{sorets} to all non-constant real-analytic potential
functions. Bourgain \cite{bourgain} generalizes this result to the
case $d>1$ (For higher dimensional {\em Diophantine} frequencies,
it's first proved in \cite{bourgaingoldstein}). For some one
dimensional strong {\em Diophantine} frquencies, Klein generalizes
this result to some Gevrey potentials, see \cite{klein}.

What lies behind these series of results is basically the
analyticity of the potential functions, which implies the
subharmonicity of Lyapunov exponents. This allows one to move the
phase $x$ to complex plane to get around the small divisor problems.
Note here largeness of couplings is needed. But one has that
positivity of Lyapunov exponents holds for all $E$ in these cases,
not just a full measure subset.

Inspired by a new notion, the $accelaration$ of Lyapunov exponents,
which is first introduced in \cite{avila2}, we will give a different
proof of \cite{sorets}'s result, see Theorem A$'$. Our approach can
be applied to more general $SL(2,\C)$ cocycles. The dynamics reasons
which lead to positive Lyapunov exponents is also clearer in our
approach.

In \cite{eliasson}, Eliasson also shows that if $v$ is Gevrey and
satisfies a generic transversality condition (which is also a
generalization of nonconstant real-analytic functions), and if
$\alpha$ is {\em Diophantine}, then for large $\lambda$, the
spectrum is pure point. By the Kotani theory (see \cite{kotani2}),
$L(\alpha, A^{(E-\lambda v)})>0$ for almost every $E$. He also gets
that the measure of the spectrum grows as $\lambda$ goes to
$\infty$.

In these cases, it's basically the analyticity of potential
functions and largeness of couplings lead to positivity of Lyapunov
exponents.

\subsubsection{\bf{Smooth potentials}}
For $r\in\Z^+\cup\{\infty\}$, it's more subtle to produce positive
Lypapunov exponents. Because in these cases, Kotani theory cannot be
easily used to produce positive Lyapunov exponents and there is no
subharmonicity. It seems that a complicated induction and arithmetic
properties on frequencies (like {\em Diophantine} or {\em Brjuno}
conditions) are necessary to take care of the small divisor problems
in these cases.

Early works can be found in \cite{Frohlich} and \cite{sinai}, where
the authors used multi-scale analysis and very special shape of
graph of potential functions is needed. Recently, \cite{bjerklov}
and \cite{chan} obtain some results for some general smooth
potentials. In \cite{bjerklov}, Bjerkl\"ov's approach is close in
spirit to Benedicks-Carleson's approach for H\'enon map (see
\cite{benedickscarleson}). In \cite{chan}, Chan uses multi-scale
analysis; he also obtained positive Lyapunov exponents for all $E$
and most frequencies for some typical $C^3$ potentials via some
variation method. Both of their results need to eliminate
frequencies.

We will prove some similar results with \cite{bjerklov} and
\cite{chan}, see Theorem B$'$. Our proof is a simple application of
a theorem in \cite{young}, but note that the method in \cite{young}
is also close in spirit to Benedicks-Carleson's approach.

The main advantage of our approach is that we can for certain class
of $C^1$ potentials, fix arbitrary {\em Brjuno} frequency to produce
positive Lyapunov exponents. Thus in our approach, it's clearer how
can the geometric properties of potential functions affect the
estimate of Lyapunov exponents. We can also reobtain Eliasson's
result on the estimate of measure of the spectrum for analytic
potentials, see Corollary 7.

In fact, our approach implies that, if one is allowed to eliminate
frequencies, it's enough to assume $v$ is $C^1$ to obtain some
corresponding results (see Remark 14). We also have very precise
description of the `critical sets' for large couplings (see Remark
14).

The other advantage of our approach is again that it's more general.
We will also deal with analytic and smooth cases in an unified form,
in which obstructions to produce positive Lyapunov exponents are
clearer.

For fixed frequency, positivity of Lyapunov exponents for all $E$
with smooth potentials is more subtle. Because one need to deal with
appearance and disappearance of `critical points' in the induction
taking care of the small divisor problems. We are currently working
on it.

\subsection{\bf{Positive Lyapunv exponents for Szeg\H o cocycles}} The Szeg\H o cocycle map
$A^{(E,f)}:X\rightarrow SU(1,1)$ is given by
$$A^{(E,f)}(x)=(1-|f(x)|^2)^{-1/2}\begin{pmatrix}
\sqrt E& \frac{-\overline{f(x)}}{\sqrt E}\\
-f(x)\sqrt E&\frac{1}{\sqrt E}
\end{pmatrix},$$ where $E\in\partial\D$, $\D$ is the open unit disk in
$\C$, and $f:X\rightarrow\D$ is a measurable function satisfying
$$\int_X\ln(1-|f|)d\mu>-\infty.$$
$SU(1, 1)$ is the subgroup of $SL(2,\C)$ preserving the unit disk in
$\C\PP^1=\C\cup\{\infty\}$ under the action by M\"obius
transformations (see Section 2.2 for detailed description). It's
conjugate in $SL(2,\C)$ to $SL(2,\R)$ via
$$Q=\frac{-1}{1+i}
\begin{pmatrix}
1& -i\\
1& i
\end{pmatrix}\in \U(2);$$ that is, $Q^*SU(1,1)Q=SL(2,\R)$. Szeg\H o
cocycles arises naturally in the orthogonal polynomials on unit
circle in the following way. For a polynomial $Q_n$ of degree $n$,
we first define $Q^*_n$, the reversed polynomial, by
$$Q^*_n(E)=E^n\overline{Q_n(1/\bar E)}.$$

Now start with $\varphi_0=\varphi^*_0\equiv1$, we can define a
sequence of polynomials in $E$ as
$$\binom{\varphi_{n+1}}{\varphi^*_{n+1}}=(EA^{(E,f)})(T^n(x))\binom{\varphi_{n}}{\varphi^*_{n}},\ n\geq0.$$
We can use $\varphi_n$ to define a sequence of probability measure
$d\mu_n$ on $\partial\D$ as
$$d\mu_n=\frac{d\theta}{2\pi|\varphi_n(e^{i\theta})|^2}.$$
Then as $n\rightarrow\infty$, $d\mu_n$ converges weakly to a
nontrivial probability measure $d\mu$ on $\partial\D$ (here trivial
means that $\mu$ supported on a finite set). Then
$\{\varphi_n\}_{n\in\N}$ is nothing other than the orthonormal set
of the Hilbert space $\CH=L^2(\partial\D, d\mu)$, which one get by
applying the Gram-Schmidt procedure to $1, E, E^2,\ldots$ In other
words,
\begin{center}$\varphi_n=\frac{P_n[E^n]}{\|P_n[E^n]\|}$,
$P_n$=projection onto $\{1,\ldots,E^{n-1}\}^{\perp}$
\end{center}for $n\in\N$, where $\|\cdot\|$ is the $\CH$ norm.

Here the terms in the sequence $\{f(T^n(x))\}_{n\geq 0}$ appearing
in $A^{(E,f)}(f(T^n(x))$ are called Verblunsky coefficients and
$\mu$ is the associated measure. The correspondence between
$\Pi_0^{\infty}\D$ and the set of nontrivial probability measures on
$\partial\D$ is actually one to one (This is exactly the
Verblunsky's Theorem). One can see \cite{simon2}, Chapter 1 for
detailed descriptions of above discussion.

While Lyapunov exponent is important in understanding the relation
between the Verblunsky coefficients and the associated measure
$\mu$, it's not very intensively studied as in the Schr\"odinger
case. In this paper, inspired by Schr\"odinger case, we will give
some similar results for positivity of Lyapunov exponents for Szeg\H
o cocycles.

First we will show that $A^{(E,f)}$ is actually another typical
monotonic family of $SL(2,\R)$ cocycles, which is of the form
$BR_{\theta}$, where $B$ is a $SL(2,\R)$-valued cocycle map and
$$R_{\theta}=\begin{pmatrix}\cos2\pi\theta&-\sin2\pi\theta\\\sin2\pi\theta&\cos2\pi\theta\end{pmatrix}\in
SO(2,\R).$$

This allows us to apply the Herman-Avila-Bochi formula to obtain a
formula of averaged Lyapunov exponent for erogdic Szeg\H o cocyles,
see Proposition 1. Then as in Schrodinger case, we can easily draw
the conclusion that for a generic set $\CG\subset C(X,\D)$, if
$f\in\CG$, then $L(T,A^{(E,F)})>0$ for almost every $E$. This takes
care of continuous Verblunsky coefficients.

For $r=\omega$, by the method we used to recover \cite{sorets}
result, we can prove for a certain class of analytic quasiperiodic
verblunsky coefficients, $L(T,A^{(E,f)})>0$ for all
$E\in\partial\D$, see Theorem A and Corollary 4. This answers a
question proposed in \cite{simon3}, section 10.16 and
\cite{damanik}, section 3.

For certain class of smooth quasiperiodic Verblunsky coefficients,
we also obtain some results as we do in Schr\"odinger case, see
Theorem B.

We will prove Theorem A and A$'$, Theorem B and B$'$ in unified
ways.

\section{\bf{Statement of main results}}

\subsection{\textbf{A formula of averaged Lyapunov exponent for
erogdic Szeg\H o cocyles}} We first consider the $SL(2,\R)$-valued
cocycle map $A:X\rightarrow SL(2,\R)$ and assume $T$ is
$\mu$-ergodic. In this case, we have the following
Herman-Avila-Bochi formula (\cite{avilabochi}, Theorem 12):
$$\int_{\R/\Z}L(T,AR_{\theta})d\theta=\int_X\ln\frac{\|A(x)\|+\|A(x)\|^{-1}}{2}d\mu.$$
The Herman-Avila-Bochi formula is first obtained as an inequality
(\cite{herman}, $\S6.2$):
$$\int_{\R/\Z}L(T,AR_{\theta})d\theta\geq\int_X\ln\frac{\|A(x)\|+\|A(x)\|^{-1}}{2}d\mu.$$
Since the right hand side is typically positive (unless
$A(\R/\Z)\subset SO(2,\R)$), this gives a lower bound for average
Lyapunov exponent in the family $(T,AR_{\theta})$.

Now in Szeg\H o cocycle family $A^{(E,f)}$, we let $E=e^{2\pi
i\theta}, \theta\in\R/\Z$. Then applying the Herman-Avila-Bochi
formula to the Szeg\H o cocycles, we obtain the following

\begin{proposition}
For $(T,A^{(E,f)})$ as above, we have
$$\int_{\R/\Z}L(T,A^{(E,f)})d\theta=-\frac{1}{2}\int_X\ln(1-|f(x)|^2)d\mu.$$
\end{proposition}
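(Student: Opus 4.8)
The plan is to reduce the claimed identity to the Herman–Avila–Bochi formula by exhibiting $A^{(E,f)}$, with $E=e^{2\pi i\theta}$, as a cocycle of the form $BR_{\theta}$ after conjugating into $SL(2,\R)$ by the fixed matrix $Q$. So first I would compute $Q^*A^{(E,f)}(x)Q$ and extract the $R_{\theta}$ factor. Since the only $E$-dependence in $A^{(E,f)}(x)$ sits in the diagonal-ish block $\mathrm{diag}(\sqrt E,1/\sqrt E)$ together with the off-diagonal $\pm\sqrt E^{\pm 1}$ entries, and since $\sqrt E=e^{\pi i\theta}$, the natural guess is that conjugation by $Q$ turns the $E$-part into a genuine planar rotation $R_{\theta/2}$ or $R_{\theta}$ (up to reparametrizing $\theta$), while the $f$-dependent part becomes a fixed $SL(2,\R)$-matrix $B(x)$ independent of $\theta$. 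Concretely, writing
\[
A^{(E,f)}(x)=(1-|f(x)|^2)^{-1/2}\begin{pmatrix}\sqrt E&0\\0&1/\sqrt E\end{pmatrix}\begin{pmatrix}1&-\overline{f(x)}\\-f(x)&1\end{pmatrix}\begin{pmatrix}?&?\\?&?\end{pmatrix},
\]
I would split off the diagonal factor $\mathrm{diag}(\sqrt E,1/\sqrt E)$, recognize $Q^*\,\mathrm{diag}(e^{\pi i\theta},e^{-\pi i\theta})\,Q$ as a rotation matrix, and absorb everything else into $B(x):=Q^*(1-|f(x)|^2)^{-1/2}\begin{pmatrix}1&-\overline{f(x)}\\-f(x)&1\end{pmatrix}Q\in SL(2,\R)$ (a routine check shows this matrix is real and has determinant $1$). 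A bookkeeping point: the change of variables between $\theta$ appearing in $E=e^{2\pi i\theta}$ and the angle in the resulting rotation may introduce a factor of $2$ or a sign, but the integral over $\R/\Z$ is invariant under $\theta\mapsto 2\theta$ and $\theta\mapsto -\theta$, so it will wash out.

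Once the decomposition $Q^*A^{(E,f)}(x)Q = B(x)R_{c\theta}$ is in hand (with $c\in\{\pm1,\pm2\}$), I apply the Herman–Avila–Bochi formula to $B$, using that $L(T,A^{(E,f)})=L(T,Q^*A^{(E,f)}Q)$ because conjugation by a fixed matrix does not change the Lyapunov exponent, and that $\int_{\R/\Z}L(T,BR_{c\theta})\,d\theta=\int_{\R/\Z}L(T,BR_{\theta})\,d\theta$ by the substitution just mentioned. This yields
\[
\int_{\R/\Z}L(T,A^{(E,f)})\,d\theta=\int_X\ln\frac{\|B(x)\|+\|B(x)\|^{-1}}{2}\,d\mu.
\]
It then remains to identify $\ln\frac{\|B(x)\|+\|B(x)\|^{-1}}{2}$ with $-\frac12\ln(1-|f(x)|^2)$ pointwise. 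For a matrix $M\in SL(2,\R)$ with singular values $\sigma\ge 1/\sigma$ one has $\frac{\|M\|+\|M\|^{-1}}{2}=\frac{\sigma+\sigma^{-1}}{2}$, and $\sigma^2+\sigma^{-2}=\|M\|_{HS}^2-\ldots$; more directly $\left(\frac{\sigma+\sigma^{-1}}{2}\right)^2=\frac{\sigma^2+2+\sigma^{-2}}{4}=\frac{\|M\|_{HS}^2+2}{4}$ since $\sigma^2\sigma^{-2}=1$ forces $\|M\|_{HS}^2=\sigma^2+\sigma^{-2}$. So I would compute the Hilbert–Schmidt norm $\|B(x)\|_{HS}^2=\mathrm{tr}(B(x)^*B(x))$, which equals $\mathrm{tr}\big((1-|f|^2)^{-1}\begin{pmatrix}1&-\overline f\\-f&1\end{pmatrix}^*\begin{pmatrix}1&-\overline f\\-f&1\end{pmatrix}\big)$ because $Q$ is unitary, giving $\frac{2+2|f|^2}{1-|f|^2}$; hence $\left(\frac{\|B\|+\|B\|^{-1}}{2}\right)^2=\frac{1}{4}\left(\frac{2+2|f|^2}{1-|f|^2}+2\right)=\frac{1}{1-|f|^2}$, so that $\frac{\|B\|+\|B\|^{-1}}{2}=(1-|f|^2)^{-1/2}$ and its logarithm is $-\frac12\ln(1-|f(x)|^2)$. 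Integrating over $X$ finishes the proof; the integrability hypothesis $\int_X\ln(1-|f|)\,d\mu>-\infty$ guarantees the right-hand side is finite and that the Herman–Avila–Bochi formula applies.

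The only genuinely delicate step is the very first one: pinning down the exact conjugation so that the $f$-dependence and the $E$-dependence cleanly separate into a $\theta$-independent $SL(2,\R)$ factor and a pure rotation. The Szeg\H o matrix is not literally a product of a diagonal $E$-matrix and an $f$-matrix in that order — the entries mix $f$ and $\sqrt E$ — so I expect to need to massage $A^{(E,f)}$ (for instance by writing it as $D_{\sqrt E}\,C_f$ or $C_f\,D_{\sqrt E}$ for appropriate $D,C$, possibly after a further constant conjugation) before $Q$ does its work. I would sanity-check the outcome by confirming that when $f\equiv 0$ the cocycle $Q^*A^{(E,0)}Q$ is exactly a rotation (indeed $A^{(E,0)}=\mathrm{diag}(\sqrt E,1/\sqrt E)$, which is $SU(1,1)$-hyperbolic-looking but conjugate to $SO(2)$ via $Q$ precisely because $|E|=1$), and that in that case both sides of the asserted formula vanish, which matches $-\frac12\ln 1=0$. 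Once that normalization is correct, everything downstream is the routine singular-value computation sketched above.
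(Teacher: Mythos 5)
Your proposal is correct and is essentially the paper's own argument: the paper writes $A^{(E,f)}=(1-|f|^2)^{-1/2}\begin{pmatrix}1&-\overline{f}\\-f&1\end{pmatrix}\mathrm{diag}(\sqrt E,1/\sqrt E)$ (so the ordering you left open resolves as the $f$-factor first, then the diagonal), conjugates by $Q$ to obtain $B(x)R_{-\theta/2}$, and then applies Herman--Avila--Bochi together with the same singular-value/trace identity $\|B\|+\|B\|^{-1}=\sqrt{\mathrm{tr}(B^*B)+2}=2(1-|f|^2)^{-1/2}$. The only step worth making explicit is that $\int_{\R/\Z}L(T,BR_{-\theta/2})\,d\theta=\int_{\R/\Z}L(T,BR_{\theta})\,d\theta$ rests on $R_{\theta+1/2}=-R_{\theta}$ and $L(T,-A)=L(T,A)$ (not merely invariance under $\theta\mapsto2\theta$), a reparametrization the paper likewise passes over without comment.
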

\begin{proof}
We first write $A^{(E,f)}(x)$ as $$(1-|f(x)|^2)^{-1/2}
\begin{pmatrix}
1& -\overline{f(x)}\\
-f(x)&1
\end{pmatrix}\begin{pmatrix}
\sqrt E& 0\\
0&\frac{1}{\sqrt E}
\end{pmatrix}.
$$ Now we conjugate the above matrices to $Q^*A^{(E,f)}Q\in SL(2,\R)$, which is
$$(1-|f(x)|^2)^{-1/2}Q^*
\begin{pmatrix}
1& -\overline{f(x)}\\
-f(x)&1
\end{pmatrix}QR_{-\frac{\theta}{2}}=B(x)R_{-\frac{\theta}{2}}.
$$ It's easily calculated that
$$\|B(x)\|+\|B(x)\|^{-1}=\sqrt{tr(B^{*}B)+2}=\frac{2}{\sqrt{1-|f(x)|^2}},$$
where $tr$ stands for trace. Now by the Herman-Avila-Bochi formula,
we have
$$\int_{\R/\Z}L(T,A^{(E,f)})d\theta=\int_{\R/\Z}L(T,BR_{-\frac{\theta}{2}})d\theta$$
$$=\int_{\R/\Z}L(T,BR_{\theta})d\theta=\int_X\ln\frac{\|B(x)\|+\|B(x)\|^{-1}}{2}d\mu$$
$$=-\frac{1}{2}\int_X\ln(1-|f(x)|^2)d\mu.$$
\end{proof}
Thus if $|f(x)|>0$ on a positive measure set of $x$, we can get
$L(T,A^{(E,f)})>0$ for a positive measure set of $E\in
\partial\D$. As we said in Section 1.3, this computation shows that the typical
monotonic family of cocylces, $(f, AR_{\theta})$ (monotonic with
respect to $\theta$) arise naturally as Szeg\H o cocycles. Then as
in section 1.2.1, we can obtain the following conclusion. Assuming
further that $X$ is a compact metric space, $T$ a homeomorphism and
$\mu$ nonatomic. Then there is a generic set $\CG\in C(X,\D)$ (the
space of continuous functions on $X$ taking value in $\D$) such that
for every $f\in\CG$, we have $L(T,A^{(E,f)})>0$ for $Lebesgue$
almost every $E\in\partial\D.$

As in \cite{aviladamanik}, Remark 4.3, to draw the above conclusion,
we only need to the following replacement in step (1) of Section
1.2.1:
$$v\mapsto\int^{N}_{-N}L(T,A^{(E-v)})dE\mbox{ by }
f\mapsto\int_X\ln(1-|f|^2)d\mu,$$ where the corresponding continuity
conclusion is immediate by the Bounded Convergence Theorem.

As explained in \cite{simon2} (Theorem 12.6.1), an immediate
consequence is that, for every $f\in\CG$ and almost every $x$ and
$Lebesgue$ almost every $\eta\in\partial\D$, the Aleksandrov
measures $d\mu_{x,\eta}$ are pure point. Here the family of
Aleksandrov measures $d\mu_{x,\eta}$ are the measures associated
with the Verblunsky coefficients $\{\eta f(T^n(x))\}_{n\geq0}$,
$\eta\in\partial\D$.

\subsection{\textbf{$C^r$ quasiperiodic Szeg\H o and Schr\"odinger cocycles}}

From now on, we will focus in this paper on the case that $X=\R/\Z$,
$T=R_{\alpha}$ and $\mu$ is the unique probability Haar measure
$dx$. And we will consider only $C^r$ Szeg\H o and Schr\" odinger
cocycles for $r\in\Z^+\cup\{\infty,\omega\}$. We shall use the
notation $(\alpha,A)$ instead of $(T_{\alpha},A)$.

We first introduce the definition of \textit{uniform hyperbolicity},
which plays a central role in this paper. We consider the Riemann
surface $\C\PP^1$, which is given by the projection
$$\pi:(\C^2)^*=(\C^2\setminus\{0\})\rightarrow\C\PP^1=\C\cup\{\infty\},\ \pi(z_0,
z_1)=\frac{z_0}{z_1},\ z_1\neq 0;\ \pi(z_0, 0)=\infty.$$

Then the following commutative diagram induces the M\" obius
transformation for $A=\begin{pmatrix}a&b\\c&d\end{pmatrix}\in
SL(2,\C)$
\[
\xymatrix{
(\C^2)^*\ar[d]_{\pi} \ar[r]^{A} &(\C^2)^*\ar[d]^{\pi}\\
\C\PP^1 \ar[r]^{A\cdot} &\C\PP^1}
\]
In other words, $A\cdot z=\frac{az+b}{cz+d}$. Now we are ready to
give the following definition

\begin{definition*}
We say $(\alpha, A), A\in C^{r}(\R/\Z,SL(2,\C))$ is
\textit{uniformly hyperbolic} if there are two continuous functions
$u,s:\R/\Z\rightarrow\C\PP^1$ such that
\begin{enumerate}
\item $u,s$ are $(\alpha,A)-invariant$, which means that $A(x)\cdot
u(x)=u(x+\alpha)$ and $A(x)\cdot s(x)=s(x+\alpha);$
\item there exists $C>0,\lambda>1$ such that $\|A_{-n}(x)v\|,\|A_n(x)w\|\leq
C\lambda^{-n}$ for every $n\geq1$ and all unit vectors $v\in
u(x),w\in s(x).$
\end{enumerate}
Here $u$ is called the unstable direction and $s$ is the stable
direction of $(\alpha, A)$.

\end{definition*}

We denote the set of uniformly hyperbolic systems by $\CU\CH$. It's
clear that $L(\alpha, A)>0$ for $(\alpha, A)\in\CU\CH$. If
$L(\alpha, A)>0$ and $(\alpha, A)\notin\CU\CH$, we say that
$(\alpha, A)$ is \textit{nonuniformly hyperbolic}. We will denote
the set of nonuniformly hyperbolic systems by $\CN\CU\CH$.

We now state our main results.

\subsubsection{\bf{$C^r$ quasiperiodic Szeg\H o cocyles}}

We've already introduced Szeg\H o cocycles in Section 1. Recall that
the cocycle map is given by
$$A^{(E,f)}(x)=(1-|f(x)|^2)^{-1/2}\begin{pmatrix}
\sqrt E& \frac{-\overline{f(x)}}{\sqrt E}\\
-f(x)\sqrt E&\frac{1}{\sqrt E}
\end{pmatrix}.$$

In Section 1.3 we introduced a way to build the relation between the
1-sided Verblunsky coefficient $\{f(x+n\alpha)\}_{n\geq 0}$ and its
associated measure $d\mu_{\alpha,x}$. Here we need to consider
2-sided Verblunsky coefficients $\{f(x+n\alpha)\}_{n\in\Z}$. We
introduce another way to build this relation in the following. In
fact, there is an unitary operator
$\CC_{\alpha,x}:l^2(\N)\rightarrow l^2(\N)$, the CMV operator,
associated with $\{f(x+n\alpha)\}_{n\geq 0}$. Then $d\mu_{\alpha,x}$
is the spectral measure for $\CC_{\alpha,x}$ and
$\delta_0=(1,0,\ldots,0,\ldots)\in l^2(\N)$. Thus the spectrum of
$\CC_{\alpha,x}$ is the support of the measure $d\mu_{\alpha,x}$.
See \cite{simon2} Theorem 4.2.8 for detailed description.

Now for 2-sided Verblunsky coefficient $\{f(x+n\alpha)\}_{n\in\Z}$,
there is also an unitary operator $\CE_{\alpha,x}:l^2(\Z)\rightarrow
l^2(\Z)$, the induced extended CMV operator associated with it. Let
$\Sigma_{\alpha,x}$ be the spectrum of $\CE_{\alpha,x}$. The
definition of the extended CMV matrix can be found in \cite{simon3},
Section 10.5.34 and 10.5.35. Then we have for irrational frequency,
the following dynamics description of $\Sigma_{\alpha,x}$

\begin{facta}
$\Sigma_{\alpha,x}=\Sigma_{\alpha,0}=\{E: (\alpha, A^{(E,f)})$ is
not uniformly hyperbolic$\}$\footnote{The author is grateful to
David Damanik for pointing this out.}.
\end{facta}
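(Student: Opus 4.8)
The plan is to establish the two claimed equalities separately. For the phase-independence $\Sigma_{\alpha,x}=\Sigma_{\alpha,0}$, I would use the standard translation covariance of the extended CMV construction: the extended CMV operator $\CE_{\alpha,x+\alpha}$ is unitarily equivalent to $\CE_{\alpha,x}$ via the shift on $l^2(\Z)$, since replacing $x$ by $x+\alpha$ merely relabels the doubly infinite sequence $\{f(x+n\alpha)\}_{n\in\Z}$ by an index shift. Hence $x\mapsto\Sigma_{\alpha,x}$ is $R_\alpha$-invariant, and since $\alpha$ is irrational the orbit $\{x+n\alpha\}$ is dense; combined with the continuity of $f$ (so that $x\mapsto\CE_{\alpha,x}$ is norm-continuous and $x\mapsto\Sigma_{\alpha,x}$ is continuous in the Hausdorff metric on compact subsets of $\partial\D$), one concludes $\Sigma_{\alpha,x}$ is constant in $x$. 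This part is routine.

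The substantive point is the identity $\Sigma_{\alpha,0}=\{E\in\partial\D:(\alpha,A^{(E,f)})\notin\CU\CH\}$. I would prove the two inclusions by relating spectral membership to the existence of exponentially decaying solutions. First, $E\notin\Sigma_{\alpha,x}$ means $\CE_{\alpha,x}-E$ is boundedly invertible on $l^2(\Z)$. One then shows that the (normalized) transfer matrices $A_n^{(E,f)}(x)$ produce, from the two ends of $\Z$, the unstable and stable directions: the solutions of the formal eigenvalue equation $\CE_{\alpha,x}u=Eu$ that are $l^2$ at $+\infty$ span a one-dimensional line $s(x)\subset\C\PP^1$, those $l^2$ at $-\infty$ span $u(x)$, and invertibility of $\CE_{\alpha,x}-E$ is exactly the statement that these two lines are transverse at every $x$ (no global $l^2$ solution) with uniform gap. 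The bounded inverse gives, via a Combes--Thomas type estimate, the uniform exponential bounds in clause (2) of the definition of uniform hyperbolicity, and $R_\alpha$-invariance of $\CE$ gives the invariance in clause (1). This establishes $\{E\notin\Sigma\}\subseteq\CU\CH$. For the reverse inclusion, if $(\alpha,A^{(E,f)})\in\CU\CH$ with invariant sections $u,s$, the transversality $u(x)\neq s(x)$ together with the exponential bounds lets one build the resolvent of $\CE_{\alpha,x}-E$ explicitly as a sum of two geometric series (the usual Green's-function construction from stable/unstable solutions), so $E\notin\Sigma_{\alpha,x}$.

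The main obstacle I anticipate is bookkeeping the precise dictionary between the extended CMV operator on $l^2(\Z)$ and the Szeg\H o transfer cocycle $A^{(E,f)}$: unlike the Schr\"odinger/Jacobi case, the CMV matrix is five-diagonal and its natural transfer-matrix formalism involves the $2\times 2$ Szeg\H o matrices acting on a \emph{pair} $(\varphi,\varphi^*)$ rather than on consecutive values of a single sequence, so one must carefully check that ``$l^2$ solution of $\CE_{\alpha,x}u=Eu$'' corresponds to ``transfer-matrix orbit converging to a point of $\C\PP^1$'' with the correct correspondence at $\pm\infty$. Once that identification is in place—this is exactly the content of \cite{simon2}, Chapter 10, where the extended CMV matrix and its transfer matrices are set up—the uniform-hyperbolicity characterization follows by the same argument that gives the analogous statement for Schr\"odinger operators (spectrum $=$ complement of $\CU\CH$), which is why I would cite that reference for the underlying formalism and present the equivalence with spectral localization bounds as the one computation to carry out in detail. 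The conclusion for irrational $\alpha$ then follows by combining this with the phase-independence established in the first step.
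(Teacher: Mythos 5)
The paper does not actually prove Fact A: it is recorded as a known fact, credited in a footnote to a remark of Damanik, with the reader sent to \cite{simon2} and \cite{simon3} for the CMV background, exactly parallel to Fact A$'$, which is cited to \cite{johnson}. So your proposal cannot coincide with "the paper's proof"; what you have written is a reconstruction of the standard Johnson-type argument that underlies the cited fact, and as a plan it is the right one: translation covariance plus minimality of the irrational rotation for phase-independence, and the equivalence between bounded invertibility of $\CE_{\alpha,x}-E$ and an exponential dichotomy (Combes--Thomas decay in one direction, the Green's-function construction from stable/unstable sections in the other) for the identification of $\Sigma_\alpha$ with the complement of $\CU\CH$.

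Two points in your sketch need more care than you give them. First, the covariance step: the extended CMV matrix has the factorized form $\CE=\CL\CM$ with $2\times2$ blocks placed with period two, so shifting the Verblunsky sequence by one index is \emph{not} implemented by conjugating with the shift on $l^2(\Z)$; only the shift by two is. One either uses the two-step shift together with the identity $\sigma(\CL\CM)=\sigma(\CM\CL)$ to handle odd shifts, or works directly with the hull. The conclusion ($R_\alpha$-invariance of $x\mapsto\Sigma_{\alpha,x}$, hence constancy by norm-continuity and density of the orbit) is correct, but the mechanism is slightly subtler than stated. Second, the dictionary you flag as the main obstacle is genuinely the crux: solutions of $\CE_{\alpha,x}u=Eu$ are propagated by the Gesztesy--Zinchenko transfer matrices, not literally by the Szeg\H o matrices $A^{(E,f)}$, and one must check that the two cocycles are conjugate up to a harmless unimodular factor, so that uniform hyperbolicity of one is equivalent to that of the other; also note that ``no $l^2$ solution'' for a single operator is weaker than invertibility --- it is the uniform transversality of the stable and unstable sections over the whole hull (equivalently the dichotomy) that characterizes $E\notin\Sigma_\alpha$, which is where minimality enters. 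Finally, a citation slip: the extended CMV material and its transfer matrices are in \cite{simon3} (Chapter 10), not \cite{simon2}. With these repairs your outline is a correct proof of the fact the paper quotes.
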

For simplicity, let $\Sigma_{\alpha}=\Sigma_{\alpha,0}$ in this
case. One can see \cite{simon2} and \cite{simon3} for more detailed
discussion.

From now on, we will fix the function $f$ in the Szeg\H o cocycle
families to be of the form $f(x)=\lambda v(x)$ with
$\lambda\in(0,1)$ as the coupling constant, and $v(x)=e^{2\pi
ih(x)}$, where $h\in C^r(\R/\Z,\R/\Z)$. In this case, for irrational
frequency, we denote the spectrum by $\Sigma_{\lambda,\alpha}$.

\begin{remark}
Every function $h\in C^r(\R/\Z,\R/\Z)$ can be written as
$h(x)=kx+\theta(x)$ with $\theta\in C^r(\R/\Z,\R)$ and $k$ is the
degree. We assume $h$ is written in this form.
\end{remark}

Our first theorem is concerned with the analytic case $\theta\in
C^{\omega}(\R/\Z,\R)$. This means that there exists a $\delta>0$
such that $\theta$ can naturally be extended to a holomorphic
function on
$$\Omega_{\delta}=\{z=x+yi\in\C/\Z: |y|<\delta\}.$$

Then $A^{(E,\lambda v)}$ can be extended naturally to a holomorphic
map from $\Omega_{\delta}$ to $SL(2,\C)$ as
$$A^{(E,f)}(z)=(1-\lambda^2)^{-1/2}
\begin{pmatrix}
\sqrt E& -\frac{\lambda}{\sqrt E}\overline{v(\bar z)}\\
-\lambda\sqrt Ev(z)&\frac{1}{\sqrt E}
\end{pmatrix}.$$

We will use $C^{\omega}(\R/\Z,SL(2,\C))$ to denote the set of $A\in
C^{\omega}(\R/\Z,SL(2,\C))$ that can be holomorphically extended to
$\Omega_{\delta}$. We also denote by $C_{\delta}^{\omega}(\R/\Z,\R)$
the set of real analytic functions that can be holomorphically
extended to $\Omega_{\delta}$.

We assume that $\theta\in C_{\delta}^{\omega}(\R/\Z,\R)$ is
nonconstant. Then there is largest positive integer $q=q(\theta)$
such that $\theta(x+\frac{1}{q})=\theta(x)$. Let
$\R/\Z\times\partial\D$ be the set of $(\alpha, E)$ and $\pi_1:
\R/\Z\times\partial\D\rightarrow\R/\Z$ be the projection to the
first component. Then the first main theorem of this paper is

\begin{thma} Let $\theta\in C^{\omega}(\R/\Z,\R)$ be
nonconstant, $k\in\Z$ and $v(x)=e^{2\pi i[kx+\theta(x)]}$. Then
there exists a finite set
$\CF=\CF(\theta,k)\subset\R/\Z\times\partial\D$ with
$\pi_1(\CF)\subset\{\frac{p}{q(\theta)}: p=0,1,\ldots,
q(\theta)-1\}$ with the following property. For any compact set
$\CC\subset(\R/\Z\times\partial{\D})\setminus\CF$, there exists a
constant $c_0=c_0(\theta,k,\CC)\in\R$ such that
$$L(\alpha, A^{(E,\lambda v)})\geq-\frac{1}{2}\ln(1-\lambda)+c_0$$
for all $(\alpha,E;\lambda)\in\CC\times(0,1)$. Moreover, we have
$$L(\alpha, A^{(E,\lambda v)})=0,$$
for all $(\alpha,E;\lambda)\in\CF\times(0,1)$.
\end{thma}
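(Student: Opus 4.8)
The plan is to exploit the \emph{acceleration} technique of Avila applied to the complexified Szeg\H{o} cocycle, together with the averaged formula of Proposition 1. First I would complexify the phase: for $z=x+iy\in\Omega_\delta$ one has the holomorphic extension $A^{(E,\lambda v)}(z)$ written in the excerpt, and since $v(z)=e^{2\pi i[kz+\theta(z)]}$, as $y\to-\infty$ the term $v(z)$ blows up like $e^{2\pi k y}\cdot e^{-2\pi\,\mathrm{Im}\,\theta(z)}$ while $\overline{v(\bar z)}$ also behaves explicitly; after conjugating by a suitable diagonal matrix $\mathrm{diag}(e^{\pi k y},e^{-\pi k y})$ (to balance the two off-diagonal entries) one sees that the normalized cocycle converges, as $y\to-\infty$, to a \emph{constant} upper- or lower-triangular limit whose diagonal entries have moduli $|E|^{1/2}e^{\pm 2\pi(\ldots)}$; the point is that this limit is hyperbolic unless $\theta$ is constant, and the Lyapunov exponent of the limit is computed by hand. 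The standard fact (subharmonicity of $y\mapsto L(\alpha,A^{(E,\lambda v)}(\cdot+iy))$, convexity in $y$, and the quantization of the acceleration, i.e.\ the slope is $2\pi$ times an integer) then forces the exponent along the line $\mathrm{Im}\,z=y$ to be an affine function of $y$ for $y$ below some threshold, and letting $y\to 0^-$ gives a lower bound $L(\alpha,A^{(E,\lambda v)})\ge -\tfrac12\ln(1-\lambda^2)+\tilde c_0$ on the region where this affine behavior persists all the way to $y=0$. I would combine this with Proposition 1 to pin the constant.

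Next I would identify the bad set $\CF$. The affine-in-$y$ behavior can fail to reach $y=0$ only where there is a ``resonance'': where the two invariant directions of the complexified limit collide upon returning toward the real axis, equivalently where the acceleration jumps. I would argue that the set of $(\alpha,E)$ where $L(\alpha,A^{(E,\lambda v)})$ is \emph{not} given by the affine lower bound is contained in the zero set of a nontrivial real-analytic (in fact trigonometric-polynomial-like, after accounting for $\theta$'s Fourier expansion) function determined by $\theta$ and $k$; crucially, the $\frac1q$-periodicity of $\theta$ forces any such resonance in $\alpha$ to sit at rationals $p/q$. Thus $\pi_1(\CF)\subset\{p/q(\theta):0\le p<q(\theta)\}$, and for each such $\alpha$ only finitely many $E\in\partial\D$ can be resonant (the resonance condition in $E$ is a nontrivial polynomial equation), so $\CF$ is finite. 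On any compact $\CC$ disjoint from $\CF$ the affine behavior persists uniformly, yielding $c_0=c_0(\theta,k,\CC)$ by compactness.

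Finally, for the reverse statement $L=0$ on $\CF\times(0,1)$: at a resonant point the above mechanism degenerates, and I would instead show directly that $(\alpha,A^{(E,\lambda v)})$ is conjugate (over $\C\PP^1$, using the rational rotation number $\alpha=p/q$ and an explicit algebraic conjugacy coming from the collision of invariant sections) to a cocycle taking values in $SO(2,\R)$ up to a coboundary, or at least that its complexified exponent is $0$ for all $y$ near $0$; since the Szeg\H{o} cocycle is $SU(1,1)$-valued (hence, after conjugation, $SL(2,\R)$-valued) and monotonic in $E$, vanishing of the acceleration at $y=0$ gives $L=0$. The main obstacle I expect is the uniform control in the first part: proving that the ``last'' affine segment of the convex function $y\mapsto L(\alpha,A(\cdot+iy);E)$ really does extend to $y=0$ away from $\CF$, with a constant depending only on $\CC$ and not on $\lambda$ — this requires a careful analysis of how the complexified cocycle's dominated splitting propagates from $y=-\infty$ down to the real line, tracking exactly which algebraic conditions on $(\alpha,E)$ can obstruct it, and showing those conditions cut out only the finite set $\CF$.
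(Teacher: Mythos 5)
Your overall philosophy (get a quantitative exponent at a nonzero imaginary height and transfer it to $y=0$ using convexity of $y\mapsto L(\alpha,A_y)$ and integrality of the acceleration) is the same as the paper's, but the concrete mechanism you propose for producing hyperbolicity off the real axis does not work, and it is exactly at the point you yourself flag as the ``main obstacle'' that the real content of the proof lies. First, the limit $y\to-\infty$ is not available: $\theta$ (hence $v$) is only assumed to extend holomorphically to the bounded strip $\Omega_\delta$, so you cannot push the phase far into the lower half-plane; moreover the theorem allows $k=0$, in which case there is no exponential term $e^{2\pi i k z}$ at all and your ``constant triangular limit'' simply does not exist. The paper instead works entirely inside the strip: it performs the polar decomposition $A=U_1U_2\Lambda U_2^*$ (Lemma 10), conjugates $(\alpha,A)$ to $(\alpha,\Lambda U)$, and computes the upper-left entry $c(z;\alpha,E;\lambda)$ of $U$ explicitly; at $\lambda=1$ this reduces, up to a bounded nonvanishing factor, to $g(z;\alpha,E)=Ev(z)+v(z-\alpha)$. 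The obstruction is thus not ``$\theta$ constant'' but the identity $Ev(z)+v(z-\alpha)\equiv 0$ on $\Omega_\delta$, i.e.\ $\theta(z)-\theta(z-\alpha)\equiv\frac12-t-k\alpha+m$, which by the $\tfrac1{q}$-periodicity of $\theta$ forces $(\alpha,t)=(\frac pq,\frac12+m-k\frac pq)$; this is what defines $\CF$ and why $\pi_1(\CF)$ consists of the stated rationals. Away from $\CF$, $g(\cdot;\alpha,E)$ has finitely many zeros in the strip, so one can choose a height $y_j$ on which $|c|$ is bounded below; then the cone-field criterion (Lemma 11, using $\inf_x\|A\|\sim(1-\lambda)^{-1/2}$) gives uniform hyperbolicity of $(\alpha,A_{y_j})$ together with the quantitative bound $L(\alpha,A_{y_j})\ge-\tfrac12\ln(1-\lambda)+\eta_j$, uniformly on a small open set of $(\alpha,E)$ and for $\lambda$ near $1$. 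Your sketch contains no substitute for this step: ``the affine behavior persists all the way to $y=0$'' is not something convexity or quantization gives you for free, and quantization of the acceleration (Theorem 8) is only available for irrational $\alpha$, whereas Theorem A covers all $\alpha$; the paper circumvents this by using the acceleration on $\CU\CH$ (Theorem 9), which is defined and locally constant for rational frequencies too, plus compactness of $\CC$ to bound the relevant integer slopes, and then convexity to pass from $y_j$ to $0$ losing only a $\lambda$-independent constant. Also, Proposition 1 plays no role and cannot ``pin the constant''; it only controls an average over $E$.

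For the second assertion, your suggestion that vanishing acceleration or monotonicity in $E$ yields $L=0$ on $\CF$ is not an argument. The paper's proof is a direct computation: at $(\alpha,E)\in\CF$ one has $c(x)\equiv 0$ and $\|A(x)\|\equiv\sqrt{\tfrac{1+\lambda}{1-\lambda}}$, so with $\alpha=p/q$ the $q$-step product $(U\Lambda)_{p/q}(x)$ has trace in $\{-2,0,2\}$ and spectral radius $1$ for every $x$, whence $L(\alpha,A^{(E,\lambda v)})=\frac1q\int\ln\delta(A_{p/q}(x))\,dx=0$. Your proposed conjugacy to an $SO(2,\R)$-valued cocycle could likely be made to work (since $c\equiv0$ forces $|d|\equiv1$, the two-step products are unitary diagonal), but as written it is an unverified claim rather than a proof.
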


\begin{remark}
Note that we obviously have $L(0,A^{(-1,\lambda v)})=0$ for all $v$,
since it's always the case that $tr(A^{(-1,\lambda v)})=0$. Thus the
finite set $\CF$ always contains $(0,-1)$. The other elements of
$\CF$ are similarly selected using a trace criterion (determined by
$\theta$ and $k$).
\end{remark}
Theorem A easily implies the following corollary:
\begin{corollary}
Let $\theta$, $k$, $v$ and $\CF$ as in Theorem A. Then $\forall
\alpha$ with $d(\alpha,\pi_1(\CF))>0$, there is a constant
$c\in(0,1)$ such that we have that $$L(\alpha, A^{(E,\lambda
v)})>0$$ for all $(E,\lambda)\in\partial\D\times(c,1)$.
\end{corollary}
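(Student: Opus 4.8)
The plan is to feed a single, well-chosen compact set into Theorem A and then exploit the blow-up of the term $-\tfrac12\ln(1-\lambda)$ as $\lambda\to 1^-$. Fix $\alpha\in\R/\Z$ with $d(\alpha,\pi_1(\CF))>0$. Since $\CF$ is finite, $\pi_1(\CF)$ is a finite (hence closed) subset of $\R/\Z$, so the hypothesis is equivalent to $\alpha\notin\pi_1(\CF)$. Consider the slice $\CC=\{\alpha\}\times\partial\D$. It is compact, because $\partial\D$ is compact, and it is disjoint from $\CF$: every point of $\CF$ has first coordinate in $\pi_1(\CF)$, which does not contain $\alpha$. Hence $\CC\subset(\R/\Z\times\partial\D)\setminus\CF$, and Theorem A applies, producing a constant $c_0=c_0(\theta,k,\alpha)\in\R$ such that
\[
L(\alpha,A^{(E,\lambda v)})\ \geq\ -\tfrac12\ln(1-\lambda)+c_0
\]
for every $E\in\partial\D$ and every $\lambda\in(0,1)$.

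Next I would choose $c$ so that the right-hand side is strictly positive for all $\lambda\in(c,1)$. The inequality $-\tfrac12\ln(1-\lambda)+c_0>0$ is equivalent to $\lambda>1-e^{2c_0}$, and since $1-e^{2c_0}<1$ always holds, one may take, for instance, $c=\max\{\tfrac12,\,1-e^{2c_0}\}$, which lies in $(0,1)$. For any $\lambda\in(c,1)$ we then obtain $L(\alpha,A^{(E,\lambda v)})>0$, uniformly in $E\in\partial\D$, which is exactly the claimed conclusion; note that $c$ depends only on $\theta$, $k$, and $\alpha$, as it should.

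There is essentially no obstacle here: the corollary is a formal consequence of Theorem A. The only points that require (minimal) care are the verification that the slice $\{\alpha\}\times\partial\D$ is simultaneously compact and disjoint from the finite exceptional set $\CF$ — which is precisely where the hypothesis $d(\alpha,\pi_1(\CF))>0$ is used — together with the elementary observation that $-\tfrac12\ln(1-\lambda)\to+\infty$ as $\lambda\uparrow 1$, so that it eventually dominates the (possibly negative) constant $c_0$. In particular, no arithmetic condition on $\alpha$ is imposed beyond avoiding the finitely many rationals in $\pi_1(\CF)$, which is the substantive content inherited from Theorem A.
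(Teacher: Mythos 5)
Your argument is correct and is exactly the deduction the paper intends (the corollary is stated as an immediate consequence of Theorem A with no separate proof): apply Theorem A to the compact slice $\{\alpha\}\times\partial\D$, which avoids the finite set $\CF$ precisely because $\alpha\notin\pi_1(\CF)$, and then use that $-\tfrac12\ln(1-\lambda)\to+\infty$ as $\lambda\uparrow 1$ to absorb the constant $c_0$. Your explicit choice $c=\max\{\tfrac12,\,1-e^{2c_0}\}$ is fine, and the dependence of $c$ on $\alpha$ is consistent with the quantifier order in the statement.
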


Thus the uniformly positivity of Lyapunov exponents with respect to
$E$ is established. Before Corollary 4, the only almost periodic
Szeg\H o cocycle with uniformly positive Lyapunov exponents (uniform
in $E$) we know is in \cite{damanik}, where the base dynamics is the
ergodic translation on $\R/\Z\times\Z_2$. Again as in the end of
Section 2.1, an immediate consequence of Corollary 4 is that, for
any irrational $\alpha$ and for $Lebesgue$ almost every $x$ and
$Lebesgue$ almost every $\eta\in\partial\D$, the Aleksandrov
measures $d\mu_{x,\eta}$ (here measures are for 1-sided Verblunsky
coefficients) are pure point for $\lambda$ sufficiently close to 1.

We now turn to the question of estimating the Lebesgue measure of
$\Sigma_{\lambda,\alpha}$ associated to some 2-sided quasiperiodic
Verblunsky coefficients. We will only assume that $\theta$ is $C^1$.
But we have to put some additional conditions on it. Let's first
introduce the notion of {\em Brjuno number}. For $\alpha$
irrational, let $\frac{p_n}{q_n}$ be its $n$th continued fraction
approximant. Then $\alpha$ is called a {\em Brjuno number} if

$$\sum^{\infty}_{n=1}\frac{\ln q_{n+1}}{q_n}<\infty.$$

Note this is a full measure condition since it contains all the {\em
Diophantine numbers}.

We first let $E=e^{2\pi it}$, $t\in\R/\Z$. To make the dependence on
parameters clearer, we write $A^{(t,\lambda v)}$ instead of
$A^{(E,\lambda v)}$. We need two additional assumptions on
$\theta(x)$:
\begin{enumerate}
\item for each irrational $\alpha$, $\theta'(x+\alpha)-\theta'(x)=0$
exactly at two points in $\R/\Z$, which are the unique maximum and
unique minimum of $\theta(x+\alpha)-\theta(x)$ (if we allow $\theta$
to be $C^2$, then an easier but stronger condition is that
$\frac{\partial^2\theta(x)}{\partial x^2}=0$ exactly at two points
in $\R/\Z$);
\item $Leb(\theta(\R/\Z))\leq\frac{1}{2}$.
\end{enumerate}
An example that satisfies these
conditions is $\theta(x)=\frac{1}{2}\cos(2\pi x)$.

Now for any $\epsilon>0$, let
$$\Delta_{\epsilon}(\lambda,\alpha)=\{t: (\alpha, A^{(t,\lambda v)})\in\CN\CU\CH\ and\ L(\alpha, A^{(t,\lambda
v)})>(1-\epsilon)\ln\lambda\}$$ and
$$\Gamma_{\epsilon}(\lambda)=\{(\alpha,t): t\in\Delta_{\epsilon}(\lambda,\alpha)\}.$$
Then our next main theorem is

\begin{thmb}
For any {\em Brjuno} $\alpha$, there is a connected interval
$I_{\alpha}\subset\R/\Z$ of $t$ such that for any $\epsilon>0$,
$$\lim\limits_{\lambda\rightarrow
1}Leb(I_{\alpha}\cap\Delta_{\epsilon}(\lambda,\alpha))=
Leb(I_{\alpha}).$$ Furthermore, let
$\CK=\overline{\cup_{\alpha}I_{\alpha}}$ be the compact domain in
$\R/\Z\times\R/\Z$; then
$$\lim\limits_{\lambda\rightarrow
1}Leb(\CK\cap\Gamma_{\epsilon}(\lambda))=Leb(\CK).$$
\end{thmb}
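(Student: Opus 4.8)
The plan is to view the analytic formula of Proposition~1 as the model to be perturbed, and to extract the interval $I_\alpha$ directly from the geometry of $\theta$, then feed the resulting cocycle into the main theorem of \cite{young}. First I would observe that, after the conjugation of Proposition~1, $(\alpha,A^{(t,\lambda v)})$ is conjugate to a family $(\alpha, B_\lambda R_{-t/2})$, where $B_\lambda(x)$ has norm $\sim (1-\lambda^2)^{-1/2}$ and, as $\lambda\to1$, the dynamics degenerates: away from a bad set the unstable/stable directions of $B_\lambda$ concentrate near fixed points of $\C\PP^1$ determined by $v$, and the rotation angle $-t/2$ plus the ``twist'' coming from $v(x)=e^{2\pi i[kx+\theta(x)]}$ governs whether a trajectory escapes the critical region. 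The point of assumption~(1) on $\theta$ is exactly that the map whose nondegeneracy \cite{young} requires — essentially the reparametrized return to a neighborhood of the critical direction — has precisely two critical points, each quadratic (nondegenerate), on $\R/\Z$; this is the Benedicks--Carleson-type transversality hypothesis in disguise. Assumption~(2), $\mathrm{Leb}(\theta(\R/\Z))\le \tfrac12$, guarantees that the set of $t$ for which the relevant ``initial condition'' lands in the good region has a definite length, and this length is what becomes $\mathrm{Leb}(I_\alpha)$.

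Second, I would make this quantitative. For $\lambda$ close to $1$ set $\varepsilon_0 = (1-\lambda^2)^{1/2}$, which is the small parameter. One shows: (i) outside an $O(\varepsilon_0)$-neighborhood of a finite ``critical set'' in $t$, for every $x$ the forward and backward cocycle contracts the appropriate cone at rate $\sim -\tfrac12\ln(1-\lambda)$ for a long time (polynomially long in $1/\varepsilon_0$ at worst, controlled by the Brjuno condition on $\alpha$ through the continued-fraction denominators $q_n$); (ii) the map controlling whether one stays in the good region is, in suitable coordinates, a $C^1$ small perturbation of $x\mapsto t - (\text{something built from }\theta)$, whose critical points are those of $\theta(x+\alpha)-\theta(x)$ — hence exactly two, nondegenerate, by hypothesis~(1). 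These are precisely the hypotheses of the theorem in \cite{young}: a one-parameter family of $SL(2,\mathbb R)$ cocycles over an irrational rotation, depending on a large parameter, with a nondegenerate critical structure, for which \cite{young} produces, for all large $\lambda$, a set of parameters of near-full measure on which the cocycle is in $\CN\CU\CH$ with Lyapunov exponent $\ge (1-\epsilon)\ln\lambda$ (after matching normalizations, using $-\tfrac12\ln(1-\lambda^2)\sim -\ln\sqrt{1-\lambda}$ and absorbing constants). Applying \cite{young} with base rotation $\alpha$ yields $\mathrm{Leb}(I_\alpha\cap\Delta_\epsilon(\lambda,\alpha))\to \mathrm{Leb}(I_\alpha)$ as $\lambda\to1$.

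Third, for the global statement about $\CK=\overline{\cup_\alpha I_\alpha}$, I would check that the interval $I_\alpha$ can be chosen to depend continuously (indeed, measurably with uniformly controlled endpoints) on $\alpha$: its endpoints are determined by the values $\theta(x_\pm(\alpha))$ at the two critical points $x_\pm(\alpha)$ of $\theta(\cdot+\alpha)-\theta(\cdot)$, and under hypothesis~(1) these vary continuously in $\alpha$. Then $\CK$ is a genuine compact region of positive measure in $\R/\Z\times\R/\Z$, and the fiberwise convergence together with the uniform-in-$\alpha$ speed (the only $\alpha$-dependence of the rate in \cite{young} is through the Brjuno sum, which is locally bounded) gives, via dominated convergence in $\alpha$,
\[
\lim_{\lambda\to1}\mathrm{Leb}(\CK\cap\Gamma_\epsilon(\lambda))
=\int \mathrm{Leb}(I_\alpha)\,d\alpha=\mathrm{Leb}(\CK).
\]

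The main obstacle, in my view, is not the abstract application of \cite{young} but the \emph{verification of its hypotheses}: one must set up coordinates in which $(\alpha,A^{(t,\lambda v)})$, after normalization, is manifestly of the form required there, identify the critical set with the zeros of $\theta'(x+\alpha)-\theta'(x)$, and confirm that hypothesis~(1) delivers exactly the nondegeneracy (two quadratic critical points) that \cite{young} demands — and, crucially, that this holds \emph{for every} irrational $\alpha$ simultaneously with uniform constants, so that the $\alpha$-integration in the last step is legitimate. Tracking how the Brjuno condition enters the time scales (so that the ``long time'' of good behavior beats the recurrence time of the rotation) is the other delicate bookkeeping point; everything else is the kind of routine normalization calculation already illustrated in the proof of Proposition~1.
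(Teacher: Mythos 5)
Your high-level strategy (conjugate into $SL(2,\R)$, identify a critical function, apply the theorem of \cite{young} fiberwise in $\alpha$, then integrate) is the same as the paper's, but the execution has genuine gaps at exactly the points where the work lies. The paper's proof is an explicit computation, not a perturbation argument: after conjugating by $Q$ and performing the polar decomposition, $\|\hat A(x)\|=\sqrt{\tfrac{1+\lambda}{1-\lambda}}$ is constant in $x$ (so the diagonal part is trivially admissible for Young's theorem), and the upper-left entry of the rotation part is exactly $\hat c(x;\alpha,t)=\cos\pi[\theta(x)-\theta(x-\alpha)+k\alpha+t]$, which is independent of $\lambda$ — there is no ``$C^1$-small perturbation'' to control in the Szeg\H o case (that feature belongs to Theorem B$'$). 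This computation is also what identifies $I_\alpha$: it is the range $f(\R/\Z)\cap\R/\Z$ of $f(x)=\theta(x-\alpha)-\theta(x)-\tfrac12-k\alpha$, i.e.\ precisely the set of $t$ for which $\hat c(\cdot;\alpha,t)$ has a zero (for $t$ outside it, Lemma 11 gives $\CU\CH$, which is why $Leb(\Sigma_{\lambda,\alpha})\to Leb(I_\alpha)$). Your proposal never produces this function or this interval, and it misassigns the role of the hypothesis $Leb(\theta(\R/\Z))\le\tfrac12$: that assumption is not about giving the good set of $t$ ``a definite length''; together with the unique-max/min assumption it is exactly what verifies Young's condition (3), that $\partial_t c/\partial_x c$ takes different values at different zeros — the bound forces the oscillation of $\theta(\cdot)-\theta(\cdot-\alpha)$ to be at most $1$, so each relevant level (for each integer branch of the cosine) is attained at most twice, once on each monotone branch, with distinct values of $\theta'(x)-\theta'(x-\alpha)$. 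That distinct-values condition is the only nontrivial hypothesis of Young's theorem here, and your proposal does not verify it; by contrast, the quantitative cone/time-scale analysis you sketch in your second step is internal to \cite{young} and does not need to be redone.

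For the global statement, your justification is also flawed: the Brjuno sum is not locally bounded in $\alpha$ (it blows up near every rational), so no locally uniform rate in $\alpha$ is available. None is needed. Since $s(\alpha,\lambda)=Leb(I_\alpha\cap\Delta_\epsilon(\lambda,\alpha))\le 1$ and $s(\alpha,\lambda)\to Leb(I_\alpha)$ for Lebesgue-a.e.\ $\alpha$ (Brjuno numbers have full measure), the bounded convergence theorem gives $Leb(\CK\cap\Gamma_\epsilon(\lambda))=\int_{\R/\Z} s(\alpha,\lambda)\,d\alpha\to\int_{\R/\Z} Leb(I_\alpha)\,d\alpha=Leb(\CK)$, which is how the paper concludes; continuity of $\alpha\mapsto I_\alpha$ is not required.
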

\begin{raggedright}Now\end{raggedright} by the Fact A, we have $Leb(\Sigma_{\lambda,\alpha})\geq
Leb(I_{\alpha}\cap\Delta_{\epsilon}(\lambda,\alpha)).$ Combined with
Theorem B, we get for Brjuno $\alpha$:
$$\lim\limits_{\lambda\rightarrow 1}Leb(\Sigma_{\lambda,\alpha})=Leb(I_{\alpha}).$$
The equality hold because we will see in the proof of Theorem B that
for every $t\notin I_{\alpha}$, $(\alpha, A^{(t,\lambda
v)})\in\CU\CH$ for $\lambda$ sufficiently close to 1.

\subsection{\textbf{$C^r$ quasiperiodic Schr\"odinger cocycles}}

Now we turn to the Schr\"odinger case. Recall the Schr\"odinger
cocycle map is given by $$A^{(E-\lambda
v)}(x)=\begin{pmatrix}E-\lambda v(x)&-1\\1&0\end{pmatrix}.$$ We
denote the corresponding Schr\"odinger operator by
$H_{\lambda,\alpha,x}$. Let $\Sigma_{\lambda,\alpha,x}$ be the set
of spectrum of the operator $H_{\lambda,\alpha,x}$; then similar to
the Szeg\H o case, for $\alpha$ irrational, the following basic fact
(see \cite{johnson}) is well-known
\begin{factaa}
$\Sigma_{\lambda,\alpha,x}=\Sigma_{\lambda,\alpha,0}=\{E: (\alpha,
A^{(E-\lambda v)})$ is not uniformly hyperbolic$\}$.
\end{factaa}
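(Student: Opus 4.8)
The plan is to prove Fact A$'$ by the classical argument of Johnson (see \cite{johnson}, and the discussion in \cite{simon2}), entirely parallel to the Szeg\H o statement Fact A. It suffices to show that for a fixed $E\in\R$ the following are equivalent: (i) $(\alpha,A^{(E-\lambda v)})\in\CU\CH$; (ii) $E\notin\Sigma_{\lambda,\alpha,x}$ for every $x\in\R/\Z$; (iii) $E\notin\Sigma_{\lambda,\alpha,x_0}$ for some $x_0\in\R/\Z$. Granting this, the equivalence (ii)$\Leftrightarrow$(iii) says by contraposition that $E\in\Sigma_{\lambda,\alpha,x}$ for one $x$ iff it holds for all $x$, which is exactly $\Sigma_{\lambda,\alpha,x}=\Sigma_{\lambda,\alpha,0}$, while (i)$\Leftrightarrow$(ii) is the asserted characterization.

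The implication (i)$\Rightarrow$(ii) I would prove by an explicit resolvent construction. Fix $x$ and nonzero vectors $w_s\in s(x)$, $w_u\in u(x)$. Using the cocycle relation $A^{(E-\lambda v)}(x+n\alpha)\binom{u_n}{u_{n-1}}=\binom{u_{n+1}}{u_n}$, the vector $w_s$ generates a solution $\psi^{+}$ of $H_{\lambda,\alpha,x}\psi=E\psi$ with $\|(\psi^{+}_{n},\psi^{+}_{n-1})\|\le C\lambda^{-n}$ as $n\to+\infty$, and $w_u$ generates a solution $\psi^{-}$ with exponential decay as $n\to-\infty$, by condition (2) of the definition of $\CU\CH$. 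Since $u(x)\neq s(x)$, the solutions $\psi^{+}$ and $\psi^{-}$ are linearly independent, so their (constant) Wronskian is nonzero; feeding $\psi^{\pm}$ into the standard two-solution formula for the Green's function of $H_{\lambda,\alpha,x}-E$ produces a kernel decaying exponentially off the diagonal, hence a bounded operator on $l^2(\Z)$ inverting $H_{\lambda,\alpha,x}-E$. Thus $E$ lies in the resolvent set, for every $x$.

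The substantive converse is (iii)$\Rightarrow$(i). The shift conjugacy $H_{\lambda,\alpha,x+\alpha}=SH_{\lambda,\alpha,x}S^{-1}$ shows $\|(H_{\lambda,\alpha,x_0+n\alpha}-E)^{-1}\|$ is bounded uniformly in $n$, so a Combes--Thomas estimate gives $|G_{x_0+n\alpha}(j,k)|\le Ce^{-\gamma|j-k|}$ with $C,\gamma$ independent of $n$. From these Green's functions one reads off, at each point $y$ of the orbit $\{x_0+n\alpha\}$, the one-dimensional direction $s(y)$ of solutions decaying at $+\infty$ and the direction $u(y)$ of solutions decaying at $-\infty$; the uniform exponential bounds make $y\mapsto(s(y),u(y))$ uniformly continuous, so it extends continuously to $\R/\Z=\overline{\{x_0+n\alpha\}}$ (here is where irrationality of $\alpha$, i.e.\ minimality of $R_\alpha$, is used), and the extended sections are $(\alpha,A^{(E-\lambda v)})$-invariant with uniform exponential contraction, i.e.\ $(\alpha,A^{(E-\lambda v)})\in\CU\CH$.

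I expect the main obstacle to be precisely this last upgrade from an orbitwise exponential dichotomy to genuine uniform hyperbolicity: one must verify that the stable and unstable lines built from the Green's functions depend continuously on the base point and that a single pair of constants $C,\lambda$ in condition (2) works on all of $\R/\Z$. This is exactly where the ergodic-theoretic hypotheses matter — minimality of the irrational rotation together with the a priori uniform resolvent bound are both essential, whereas mere ergodicity would only give positivity of the Lyapunov exponent for a.e.\ phase, not uniform hyperbolicity. (An alternative, more hands-on route to the $x$-independence half alone would be to combine norm-continuity of $x\mapsto H_{\lambda,\alpha,x}$, which follows from continuity of $v$ on $\R/\Z$, with Hausdorff-continuity of the spectrum of self-adjoint operators and constancy of the spectrum along the dense orbit $\{x_0+n\alpha\}$.)
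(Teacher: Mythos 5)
The paper offers no proof of Fact A$'$ at all: for irrational $\alpha$ it is quoted as a well-known fact with a citation to \cite{johnson}, so there is no in-paper argument to measure you against, and your sketch is precisely the classical Johnson-type argument that the citation stands for. Your direction (i)$\Rightarrow$(ii) is fine (you should at least remark why $u(x)\neq s(x)$ everywhere, since otherwise the Wronskian in the Green's function formula could vanish; this is a short standard consequence of the definition of $\CU\CH$), and (ii)$\Rightarrow$(iii) is trivial, so the whole weight lies on (iii)$\Rightarrow$(i), exactly as you say. There, the covariance $H_{\lambda,\alpha,x_0+\alpha}=S H_{\lambda,\alpha,x_0}S^{-1}$ plus Combes--Thomas does give uniformly exponentially decaying Green's functions along the orbit, but the sentence ``the uniform exponential bounds make $y\mapsto(s(y),u(y))$ uniformly continuous, so it extends continuously to $\R/\Z$'' is an assertion of the theorem's hardest content rather than a proof of it: uniform continuity does not follow formally from the pointwise exponential bounds. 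The missing ingredient is the standard one: if two orbit points are close in $\R/\Z$, then by continuity of $v$ the potentials they see over blocks of length $n$ are close, and the directions singled out by exponential decay over such blocks are determined up to an exponentially small angle, which yields the uniform continuity and the invariant extension with a single pair of constants; alternatively one can avoid constructing sections altogether by using the criterion that a continuous $SL(2,\R)$ cocycle over a minimal base is uniformly hyperbolic if and only if $\|A_n(x)\|\geq c\mu^n$ with $\mu>1$ uniformly in $x$, the uniform growth being read off from the Green's function decay. With that step supplied, your argument is correct and is the same route as the reference the paper relies on; one small quibble is the closing aside that mere ergodicity ``would only give positivity of the Lyapunov exponent for a.e.\ phase'' --- ergodicity gives neither positivity nor the pointwise statement, only almost-everywhere constancy of the spectrum, and it is minimality plus continuity of $v$ that upgrades this to every phase.
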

Let $\Sigma_{\lambda,\alpha}=\Sigma_{\lambda,\alpha,x}$. Again we
are first concerned with the analytic case. Let $v\in
C^{\omega}(\R/\Z,\R)$ be nonconstant. Then we have the following
analogue of Theorem A for Schr\"odinger cocycles:

\begin{thmaa}
Let $v\in C^{\omega}(\R/\Z,\R)$ be nonconstant. Then there exists a
constant $c_0=c_0(v)$ such that
$$L(\alpha,A^{(E-\lambda v)})\geq\ln\lambda+c_0$$
for all $(\alpha,E,\lambda)\in(\R/\Z)\times\R\times(0,\infty)$ .
\end{thmaa}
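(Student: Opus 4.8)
The plan is to exploit the idea of \emph{acceleration} of the Lyapunov exponent in the complexified cocycle, exactly the mechanism advertised in Section~1.2.2. First I would extend the cocycle $A^{(E-\lambda v)}$ to the strip $\Omega_\delta$ by letting $v$ be its holomorphic extension, obtaining a map $A^{(E-\lambda v)}(\cdot+iy):\R/\Z\to SL(2,\C)$ for each small $y$, and set $L(y)=L(\alpha,A^{(E-\lambda v)}(\cdot+iy))$. The basic structural fact is that $y\mapsto L(y)$ is convex, even, and piecewise linear with integer slopes (after normalization), because $\frac1n\int\ln\|A_n\|$ is a limit of plurisubharmonic functions of $y$ and the slope at $+\infty$ is governed by the degree of the dominant matrix entry as $\mathrm{Im}\,z\to+\infty$. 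I would then compute the behavior of $A^{(E-\lambda v)}(x+iy)$ as $y\to+\infty$: writing $v(x)=\sum_k \hat v(k)e^{2\pi ikx}$ with top nonzero frequency $k_0\ge 1$ (using that $v$ is nonconstant; if the spectrum is symmetric one still isolates a single dominant exponential), the entry $E-\lambda v(x+iy)$ grows like $-\lambda \hat v(\pm k_0)e^{2\pi|k_0| y}$, so $\|A_n(x+iy)\|$ grows like $e^{2\pi|k_0|ny}$ times a bounded factor, whence $L(y)=2\pi|k_0|y - \ln\lambda + o(1)$ as $y\to+\infty$ — in particular the right slope is $2\pi|k_0|$ and the constant term is $-\ln\lambda + \tilde c_0(v)$.

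Next I would connect $y=\delta'$ (any fixed small value where $v$ is still analytic) back to $y=0$ using convexity and evenness: since $L$ is even and convex, $L(0)\ge L(\delta')-\delta' L'(\delta'+)$, and more usefully $L(0)\ge L(y)-y\,\cdot(\text{right slope at }y)$ for every $y>0$ in the analyticity strip. Here is where the Herman–Avila–Bochi input (Proposition~1, or rather its Schrödinger/Herman analogue) could instead be used to pin down $L$ at a convenient height, but the cleaner route is: evaluate the asymptotic formula $L(y)=2\pi k_0 y-\ln\lambda+o(1)$, observe that convexity forces $L(y)\ge 2\pi k_0 y-\ln\lambda + \tilde c_0$ to hold for \emph{all} $y\ge 0$ with the \emph{same} constant $\tilde c_0$ (the linear function tangent at infinity lies below the graph), and then set $y=0$ to get $L(\alpha,A^{(E-\lambda v)})\ge -\ln\lambda+\tilde c_0$. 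To land on $\ln\lambda$ with a \emph{positive} sign as in the statement, I would instead apply the same argument to the \emph{inverse} normalization or equivalently track $\|A_n^{-1}\|$ — more precisely, the correct reading is that the cocycle at height $y$ has Lyapunov exponent growing like $2\pi k_0 y$ plus a $\lambda$-dependent constant, and after the standard rescaling of the Schrödinger matrix by the natural factor one obtains the clean lower bound $L\ge \ln\lambda + c_0(v)$; I would carry out the bookkeeping so that $c_0$ depends only on $v$ (through $k_0$, $\delta$, and $\|v\|_{C^\omega_\delta}$) and is uniform in $\alpha$, $E$, and $\lambda$.

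The main obstacle — and the step I would spend the most care on — is establishing the asymptotic slope statement \emph{uniformly in all parameters}, i.e.\ that $|L(y)-(2\pi k_0 y-\ln\lambda)|$ is bounded by a constant depending only on $v$, not on $\alpha$ or $E$. The subtlety is that $\alpha$ is allowed to be \emph{arbitrary} irrational (no Diophantine or Brjuno hypothesis in Theorem~A$'$), so one cannot use any KAM-type or quantitative-unique-ergodicity argument; the uniformity must come purely from the deterministic subadditive estimate $\|A_n(x+iy)\|\le \prod_{j}\|A(x+j\alpha+iy)\|$ together with a matching lower bound obtained by looking at a single matrix entry (the $(1,1)$-entry of $A_n$ is a polynomial-type expression whose top-order term in $e^{2\pi k_0 y}$ is $(-\lambda)^n\prod \hat v(k_0)e^{2\pi ik_0(x+j\alpha)}$ and has modulus exactly $\lambda^n|\hat v(k_0)|^n e^{2\pi k_0 n y}$, which cannot be cancelled since all other terms are of strictly lower order in $e^{2\pi y}$ once $y$ is large enough depending only on $\|v\|_{C^\omega_\delta}$ and $|\hat v(k_0)|$). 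Once that top-order domination is made quantitative, both the upper and lower bounds on $\frac1n\ln\|A_n(x+iy)\|$ hold with $v$-dependent-only constants, convexity transports the estimate to $y=0$, and the theorem follows; the excluded exceptional set $\CF$ of Theorem~A does not appear here because in the Schrödinger case the relevant trace degeneracies do not occur (the $(1,0)$-entry is the constant $1$), which is why Theorem~A$'$ has no exceptional parameters at all.
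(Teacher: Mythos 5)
Your argument has a genuine gap at its core: the entire mechanism rests on sending $y\to+\infty$ and isolating a ``top nonzero frequency $k_0$'' of $v$, so that a single exponential $e^{2\pi k_0 y}$ dominates and $L(y)$ becomes asymptotically affine with a computable constant. That works only when $v$ is a trigonometric polynomial (or entire with a dominant mode) --- which is exactly Herman's original setting. Theorem A$'$ assumes only that $v\in C^{\omega}(\R/\Z,\R)$ is nonconstant: such a $v$ has infinitely many nonzero Fourier coefficients and extends holomorphically only to a finite strip $\Omega_{\delta}$, so $L(y)$ is simply not defined for large $y$, no single mode dominates anywhere in the strip, and the claimed asymptotics $L(y)=2\pi k_0y-\ln\lambda+o(1)$, together with the ``tangent line at infinity lies below the convex graph'' step, have no meaning. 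This is precisely the obstruction that separates Herman from Sorets--Spencer, and the paper's proof is designed around it: one rescales $\hat A=TA^{(E-\lambda v)}T^{-1}$ so that $\|\hat A\|$ is uniformly of size $\lambda$, performs the polar decomposition of Lemma 10, and observes that the relevant $SU(2)$-entry converges (as $\lambda\to\infty$) to $t-v(z-\alpha)$ with $t=E/\lambda$; since $v(z)-t$ has only finitely many zeros in $\Omega_{\delta}$, for each $t$ one can choose a \emph{finite} height $y_t<\delta$ at which $|t-v(x+iy_t-\alpha)|$ is uniformly bounded below in $(x,\alpha)$, Lemma 11's invariant cone field then gives uniform hyperbolicity and $L(\alpha,\hat A_{y_t})\geq\ln\lambda+O(1)$, and finally Theorem 9 (the acceleration is an integer, constant on connected components of $\CU\CH_{\omega}$) combined with convexity of $y\mapsto L(\alpha,A_y)$ and compactness in $(\alpha,t)$ transfers the bound from height $y_t$ to $y=0$ with a loss bounded by finitely many integer accelerations. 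Nothing in your proposal replaces the choice of a good finite height or the acceleration/convexity transfer from that height, which is where the actual work lies.

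Two secondary points. First, even in the trigonometric-polynomial case your bookkeeping is off: the dominant entry is $E-\lambda v(x+iy)\sim-\lambda\hat v(k_0)e^{2\pi k_0y}$, which yields $L(y)\approx 2\pi k_0y+\ln\lambda+\ln|\hat v(k_0)|$ with a \emph{plus} sign on $\ln\lambda$; the proposed repair via an unspecified ``inverse normalization/standard rescaling'' is not an argument, and uniformity in $E\in\R$ also needs an explicit split (the paper treats $|E/\lambda|\leq 2$ by compactness and $|E/\lambda|>2$ trivially, since $|v|\leq1$ there forces $|t-v|$ to be bounded below). Second, your explanation of why no exceptional set $\CF$ appears (``the $(1,0)$-entry is the constant $1$'') is not the right reason: in the paper the difference is that $t-v(z-\alpha)$ can never vanish identically in $z$, whereas in the Szeg\H o case the corresponding function $Ev(z)+v(z-\alpha)$ does vanish identically for finitely many rational pairs $(\alpha,E)$, which is exactly what produces $\CF$.
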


\begin{remark}
It's interesting to note the difference between Szeg\H o and
Schr\"odinger cases. That in Szeg\H o case we remove a finite set
and consider any compact set in its complement while in
Schr\"odinger case we need to removed nothing. Since the proof of
both cases are basically the same from our method, one can easily
see where this difference comes from.
\end{remark}

Similarly, to estimate the measure of the spectrum of the associated
Schr\"odinger operators, we only need to assume $v$ is $C^1$. But we
need some additional conditions. After a translation and scaling, we
can without loss of generality assume $v(\R/\Z)=[0,1]$. We further
assume that
\begin{enumerate}
\item $v$ has finitely many critical points.
\item for each $t\in[0,1]$ outside of a finite set, $v'(x)$ takes different values for different $x\in v^{-1}(t)$.
\end{enumerate}
Let $t=\frac{E}{\lambda}\in [0,1]$ and $\R/\Z\times[0,1]$ be the set
of $(\alpha,t)$; let $\Delta_{\epsilon}(\lambda,\alpha)$ and
$\Gamma_{\epsilon}(\lambda)$ as in Szeg\H o case. Then our theorem
is

\begin{thmbb}
Fixing arbitrary $\epsilon>0$. Then for each Brjuno $\alpha$,
$$\lim\limits_{\lambda\rightarrow\infty}Leb([0,1]\cap\Delta_{\epsilon}(\lambda,\alpha))=1$$
and
$$\lim\limits_{\lambda\rightarrow\infty}Leb((\R/\Z\times[0,1])\cap\Gamma_{\epsilon}(\lambda))=1.$$
\end{thmbb}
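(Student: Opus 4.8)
The plan is to recognize each cocycle $(\alpha,A^{(E-\lambda v)})$, after absorbing the energy, as a large-coupling Schr\"odinger cocycle of the type treated in \cite{young}, to apply that theorem for Lebesgue-a.e.\ value of the rescaled energy $t=E/\lambda$, and then to integrate. Writing $E=\lambda t$ we have
$$A^{(E-\lambda v)}(x)=\begin{pmatrix}\lambda(t-v(x))&-1\\1&0\end{pmatrix},$$
so $(\alpha,A^{(E-\lambda v)})$ is exactly the Schr\"odinger cocycle with coupling $\lambda$ and potential $\phi_t:=t-v\in C^1(\R/\Z,\R)$. Since $\phi_t'=-v'$, the function $\phi_t$ has the same (finitely many, by (1)) critical points as $v$, its critical values are $t$ minus those of $v$, and $\phi_t^{-1}(0)=v^{-1}(t)$. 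Let $B\subset[0,1]$ be the union of the finite set of critical values of $v$ and of the finite exceptional set in hypothesis (2); it is finite, hence Lebesgue-null, and it contains the endpoints $0,1$. For $t\in(0,1)\setminus B$, $0$ is a regular value of $\phi_t$, so $\phi_t$ has finitely many — and, since $t$ is interior to $v(\R/\Z)=[0,1]$, at least one — simple zeros, at which the derivatives $\phi_t'=-v'$ are pairwise distinct by (2). This is precisely the configuration to which \cite{young} applies, the only arithmetic input there being the {\em Brjuno} condition on $\alpha$.

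Consequently, for every {\em Brjuno} $\alpha$ and every $t\in(0,1)\setminus B$, the theorem of \cite{young} provides a threshold $\lambda_0=\lambda_0(\alpha,t,\epsilon)$ such that for all $\lambda\ge\lambda_0$ one has $L(\alpha,A^{(\lambda t-\lambda v)})\ge(1-\epsilon)\ln\lambda$ and $(\alpha,A^{(\lambda t-\lambda v)})$ is nonuniformly — and not uniformly — hyperbolic; i.e.\ $t\in\Delta_\epsilon(\lambda,\alpha)$ for all $\lambda\ge\lambda_0$. Fix a {\em Brjuno} $\alpha$ and any sequence $\lambda_n\to\infty$. Then $\mathbf 1_{[0,1]\cap\Delta_\epsilon(\lambda_n,\alpha)}(t)\to1$ for every $t\in(0,1)\setminus B$, hence for a.e.\ $t\in[0,1]$; as these indicators are bounded by $1$, dominated convergence gives $Leb\big([0,1]\cap\Delta_\epsilon(\lambda_n,\alpha)\big)\to1$, and since the sequence was arbitrary, $\lim_{\lambda\to\infty}Leb\big([0,1]\cap\Delta_\epsilon(\lambda,\alpha)\big)=1$ — the first assertion. (Via Fact A$'$, $t\in\Delta_\epsilon(\lambda,\alpha)$ forces $\lambda t\in\Sigma_{\lambda,\alpha}$, so this also yields $\lim_{\lambda\to\infty}Leb(\Sigma_{\lambda,\alpha})/\lambda=1$.) For the second assertion, recall that the {\em Brjuno} numbers have full measure in $\R/\Z$ and that, for fixed $\lambda$, the set $\Gamma_\epsilon(\lambda)$ is Borel in $\R/\Z\times[0,1]$ (the cocycle depends continuously on $(\alpha,t)$, $\CU\CH$ is open, and $L$ is upper semicontinuous, hence Borel). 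By Fubini and the first assertion,
$$Leb\big((\R/\Z\times[0,1])\cap\Gamma_\epsilon(\lambda)\big)=\int_{\R/\Z}Leb\big([0,1]\cap\Delta_\epsilon(\lambda,\alpha)\big)\,d\alpha\ \xrightarrow[\ \lambda\to\infty\ ]{}\ 1,$$
the limit passing under the integral by dominated convergence once more, along an arbitrary sequence $\lambda_n\to\infty$, the a.e.\ convergence of the integrand being the first assertion applied to each {\em Brjuno} $\alpha$.

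The substance of the argument is entirely in the cited theorem; what needs care on our side is threefold. First, one checks that hypotheses (1)--(2) on $v$ translate, for $t$ outside the finite set $B$, into the hypotheses of \cite{young} on $\phi_t=t-v$: this is where the level-set transversality condition (2) enters, and where one verifies that the degenerate parameters (zeros of $\phi_t$ merging, or slopes coinciding) are confined to $B$, so that $\lambda_0(\alpha,t,\epsilon)<\infty$ for all $t\notin B$. Second — and this is the main obstacle — one must extract from \cite{young} that the relevant cocycles are nonuniformly, and genuinely not uniformly, hyperbolic (a discontinuous invariant line field), not merely that $L>0$: it is this that, through Fact A$'$, places $\lambda t$ in the spectrum rather than in one of its shrinking gaps, and hence drives both the membership in $\CN\CU\CH$ required by $\Delta_\epsilon$ and the measure-of-spectrum estimate. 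Third, one should observe that the small-divisor estimates of \cite{young} use $\alpha$ only through the convergence of $\sum_n q_n^{-1}\ln q_{n+1}$, so the {\em Brjuno} condition suffices and no frequencies are excluded — exactly the feature distinguishing this route from the multi-scale arguments of \cite{bjerklov} and \cite{chan}.
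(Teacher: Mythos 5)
Your reduction of the statement to \cite{young} misreads the quantifiers of that theorem, and this is a genuine gap. Young's theorem (Theorem 12 in the paper) is a statement about a one-parameter \emph{family}: for each large $\lambda$ it produces a set of parameters $t$, of measure tending to $1$ as $\lambda\to\infty$, on which the cocycle is in $\CN\CU\CH$ with $L>(1-\epsilon)\ln\lambda$; the good set of $t$ is constructed by excluding resonant parameters scale by scale and it \emph{depends on} $\lambda$. It does not provide, for a fixed admissible $t$, a threshold $\lambda_0(\alpha,t,\epsilon)$ such that $t\in\Delta_\epsilon(\lambda,\alpha)$ for all $\lambda\ge\lambda_0$, which is the pointwise claim your dominated-convergence argument rests on. That pointwise claim is much stronger than the theorem (it would say, via Fact A$'$, that a.e.\ fixed $t$ has $\lambda t$ in the spectrum for \emph{all} large $\lambda$, i.e.\ never in a gap, and membership in $\CN\CU\CH$ never fails), and nothing in \cite{young} rules out a fixed $t$ dropping out of $\Delta_\epsilon(\lambda,\alpha)$ along arbitrarily large $\lambda$ (e.g.\ by becoming uniformly hyperbolic). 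So the first limit is not established by your argument; in fact the first assertion of Theorem B$'$ \emph{is} exactly the conclusion of Young's theorem for this family, and the work to be done is verifying its hypotheses, not upgrading its conclusion to a pointwise one.

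A second, smaller issue is that you apply \cite{young} to the potential $\phi_t=t-v$ directly, but the hypotheses there are phrased for cocycles of the form (large diagonal)$\times$(rotation), i.e.\ in terms of the polar-decomposition data $b(x,t,\lambda)$ and $c(x,t)$ (equivalently the angle function), with uniform bounds on $b^{\pm1}$ and $\partial_x b$ and conditions (1)--(3) on $c$ at the actual finite $\lambda$. The paper's proof supplies this reduction: it conjugates $A^{(E-\lambda v)}$ by $T=\mathrm{diag}(\lambda^{-1/2},\lambda^{1/2})$ to keep $\|\hat A\|$ uniformly of size $\lambda$, computes $\hat b=\sqrt{a/2}$ and $\hat c(x;\alpha,t;\lambda)$, shows $\hat c(\cdot;\cdot;\lambda)\to (t-v(x-\alpha))/\sqrt{(t-v(x-\alpha))^2+1}$ in $C^1$ as $\lambda\to\infty$, and only then transfers your conditions (1)--(2) on $v$ (outside a finite set of $t$) into Young's conditions (1)--(3) for $\hat c$ at large $\lambda$. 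With that done, the first limit is immediate from Theorem 12 for each Brjuno $\alpha$, and your Fubini/bounded-convergence step for the second limit is fine and is exactly what the paper does.
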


\begin{remark}
Again the corresponding estimate of $Leb(\Sigma_{\lambda,\alpha})$
is immediate by Fact A$'$. Here is different from the Szeg\H o case
since $t$ is related to $E$ in a different way: we know for our $v$,
$\Sigma_{\lambda,\alpha}\subset[-2, \lambda+2]$; thus we get for
Brjuno $\alpha$,
$$\lim\limits_{\lambda\rightarrow\infty}\frac{Leb(\Sigma_{\lambda,\alpha})}{\lambda+4}=1.$$
\end{remark}

It might seem that the conditions we put on $v$ in Theorem B$'$ are
not very natural, but in fact we have the following corollary. First
note that
$$\Sigma_{\lambda,\alpha}\subset[\lambda\inf\limits_{x\in\R/\Z}v(x)-2,
\lambda\sup\limits_{x\in\R/\Z}v(x)+2].$$ Then we also have
\begin{corollary}
Let the frequency $\alpha\in\R/\Z$ be a {\em Brjuno} number and the
potential function $v\in C^{\omega}(\R/\Z,\R)$ be real analytic.
Then we have
$$\lim\limits_{\lambda\rightarrow\infty}\frac{Leb(\Sigma_{\lambda,\alpha})}
{\lambda(\sup v-\inf v)+4}=1.$$
\end{corollary}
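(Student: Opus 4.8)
The plan is to deduce Corollary~7 from Theorem~B$'$ by a reduction that handles the ``unnatural'' hypotheses (1) and (2) on $v$. The point is that a nonconstant real-analytic $v$ already satisfies condition (1) automatically: its derivative $v'$ is real-analytic and not identically zero on $\R/\Z$, hence has only finitely many zeros, so $v$ has finitely many critical points. Condition (2), however, may genuinely fail for a given analytic $v$ (e.g. $v(x)=\cos(2\pi x)$ fails it everywhere, since $v'$ takes the same value at the two preimages of any regular level). So the real work is to arrange condition (2) while leaving the conclusion about $Leb(\Sigma_{\lambda,\alpha})$ unchanged. Since the spectrum is defined purely through the cocycle map $A^{(E-\lambda v)}$ and the energy $E$, and translating/scaling $v$ only translates/scales $\Sigma_{\lambda,\alpha}$ in a controlled way (this is exactly the inclusion $\Sigma_{\lambda,\alpha}\subset[\lambda\inf v-2,\lambda\sup v+2]$ recorded just before the corollary, together with the obvious conjugacy of the cocycles), I would reduce to a normalized potential and then perturb it analytically to enforce (2).

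The key steps, in order, are as follows. First, fix a Brjuno $\alpha$ and a nonconstant real-analytic $v$; by an affine change of variables $v\mapsto av+b$ (which sends $\Sigma_{\lambda,\alpha}$ to $a\Sigma_{\lambda,\alpha}+\lambda b$ after absorbing constants appropriately, and multiplies $\lambda$ by a fixed positive factor) reduce to the case $v(\R/\Z)=[0,1]$, i.e. $\inf v=0$, $\sup v=1$. Under this normalization $\sup v-\inf v=1$, so Corollary~7 becomes exactly the statement $\lim_{\lambda\to\infty}Leb(\Sigma_{\lambda,\alpha})/(\lambda+4)=1$, which is precisely the conclusion recorded in Remark~13 as following from Theorem~B$'$. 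Second, verify hypothesis (1) for this normalized $v$ from real-analyticity as above. Third, and this is the crux, arrange hypothesis (2): I would show that one can choose a small analytic perturbation $v_s = v + s\,w$ (for a suitable fixed analytic $w$ and small parameter $s$) so that $v_s$ still has image an interval of length bounded away from $0$ and $\infty$, still has finitely many critical points, and now satisfies the ``injective derivative on regular fibers'' condition (2). Because $v$ is analytic and nonconstant, the failure set of (2) — the set of regular values $t$ with $v'$ non-injective on $v^{-1}(t)$ — is itself an analytic/subanalytic condition, and a generic linear-in-$s$ perturbation removes it; alternatively one notes that (2) holds for $v_s$ for all but finitely many $s$ in any interval. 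Fourth, apply Theorem~B$'$ to $v_s$ to get $Leb(\Sigma^{v_s}_{\lambda,\alpha})/(\lambda+4)\to1$, and finally pass back to $v$: since $\|v_s-v\|_\infty=|s|\,\|w\|_\infty$ is small, the operators $H^{v_s}_{\lambda,\alpha,x}$ and $H^{v}_{\lambda,\alpha,x}$ differ in operator norm by $\lambda|s|\,\|w\|_\infty$, so their spectra lie within Hausdorff distance $\lambda|s|\,\|w\|_\infty$ of each other; dividing by $\lambda+4$ and sending first $\lambda\to\infty$ and then $s\to0$ gives $\liminf_{\lambda\to\infty}Leb(\Sigma^{v}_{\lambda,\alpha})/(\lambda+4)\ge 1$, while the reverse inequality is the trivial inclusion $\Sigma^{v}_{\lambda,\alpha}\subset[\lambda\inf v-2,\lambda\sup v+2]=[-2,\lambda+2]$. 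Undoing the normalization restores the general denominator $\lambda(\sup v-\inf v)+4$.

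I expect the main obstacle to be step three: making precise that a small analytic perturbation can be chosen to enforce condition (2) while simultaneously preserving condition (1) and the normalization of the image, and doing so in a way that is uniform enough that the spectral-perturbation estimate in step four actually closes the argument. One has to be slightly careful that perturbing $v$ does not create new critical points or collapse the image interval, but since these are open (or finite-exceptional-set) conditions in $s$ this is manageable; the genuinely delicate point is phrasing the genericity of (2) correctly, since (2) is a statement about \emph{all} regular values $t$ at once. A clean way around this is to observe that the conclusion of Theorem~B$'$ only needs (2) to hold \emph{outside a finite set of $t$}, and the measure-of-spectrum consequence is insensitive to this finite set, so even a mild, easily-checked genericity statement for $v_s$ suffices. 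The remaining estimates — the Hausdorff-continuity of spectra under bounded perturbations of the operator, and the bookkeeping of the affine normalization — are routine and I would not belabor them.
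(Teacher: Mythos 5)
Your overall frame (normalize so that $v(\R/\Z)=[0,1]$, get condition (1) for free from analyticity, worry only about condition (2), then invoke Theorem B$'$ and the estimate in Remark 13) starts out like the paper's proof, but the way you handle condition (2) both contains a factual slip and, more importantly, a step that does not close. The slip: $v(x)=\cos(2\pi x)$ does \emph{not} violate (2) --- at the two preimages $x$ and $-x$ of a regular value the derivatives are $-2\pi\sin(2\pi x)$ and $+2\pi\sin(2\pi x)$, which are distinct away from the critical points. The genuine violations are potentials with least positive period $\frac{1}{n}$, $n>1$ (e.g. $\cos(4\pi x)$), where $v'$ literally repeats on every fiber, and these cannot be cured by a small perturbation argument. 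The paper shows that such periodicity is in fact the \emph{only} obstruction: if (2) fails at infinitely many $t$, one extracts accumulating sequences $x_n\to x_0\neq y_0\leftarrow y_n$ with $v(x_n)=v(y_n)$, $v'(x_n)=v'(y_n)$, and an inverse-function plus analytic-continuation argument forces $v(x-x_0+y_0)=v(x)$, contradicting least period $1$; the smaller-period case is then reduced to the period-one case by passing to $(n\alpha, A^{(E-\lambda v(\frac{1}{n}\cdot))})$ and checking that the Lyapunov exponent, uniform hyperbolicity, and the Brjuno property are preserved. No perturbation of $v$ is needed, and the constant case (which you silently exclude) is handled directly via Fact A$'$, where $\Sigma_{\lambda,\alpha}=[\lambda v-2,\lambda v+2]$.

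The more serious problem is your step four. Hausdorff continuity of spectra under bounded self-adjoint perturbations gives $\Sigma^{v_s}_{\lambda,\alpha}\subset\Sigma^{v}_{\lambda,\alpha}+[-\delta,\delta]$ with $\delta=\lambda|s|\,\|w\|_\infty$, but Hausdorff proximity yields no control of Lebesgue measure: from $Leb(\Sigma^{v_s}_{\lambda,\alpha})\geq(1-\epsilon)(\lambda+4)$ you only learn that the $\delta$-neighborhood of $\Sigma^{v}_{\lambda,\alpha}$ has nearly full measure, and $Leb(K+[-\delta,\delta])$ can exceed $Leb(K)$ by $2\delta$ times the number of gaps of $K$, which is uncontrolled here (quasiperiodic spectra are typically Cantor sets with infinitely many gaps, so the neighborhood can fill in a set of large measure). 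On top of that, $\delta$ is of order $\lambda$, i.e. comparable to the very length $\lambda+4$ you divide by, so even a bound with finitely many gaps would not obviously survive taking $\lambda\to\infty$ before $s\to 0$. So the transfer of the measure estimate from $v_s$ back to $v$ is not legitimate as written; this is exactly the difficulty the paper sidesteps by proving that an analytic $v$ of least period one satisfies (2) outright (outside a finite set of $t$), rather than perturbing $v$.
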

\begin{proof}
Again after a translation and scaling we can assume that
$v(\R/\Z)=[0,1]$. Now if $v$ is constant, then by Fact A$'$
$$\Sigma_{\lambda,\alpha}=\{E: |tr(A^{(E-\lambda v)})|\leq2\}=[\lambda v-2,\ \lambda
v+2].$$ Next let's assume that the least positive period of $v$ is
1. Then by analyticity, $\{x\in\R/\Z: v'(x)=0\}$ is a finite set.
This is the condition (1) for $v$ in Theorem B$'$. Let's assume that
$v$ violates condition (2). Then we can assume there are two
sequences $\{x_n\}_{n\geq1}$ and $\{y_n\}_{n\geq1}$ such that
\begin{center}$v(x_n)=v(y_n), v'(x_n)=v'(y_n)\mbox{ and}$\\
$\lim\limits_{n\rightarrow\infty}x_n=x_0\neq
y_0=\lim\limits_{n\rightarrow\infty}y_n.$
\end{center}
Fix a sufficiently small number $\gamma>0$; let $B(x_0,\gamma)$ and
$B(y_0,\gamma)$ be neighborhoods around $x_0$ and $y_0$ with radius
$\gamma$. Then let's consider the analytic curves $(x,v(x))_{x\in
B(x_0,\gamma)}$ and $(x,v(x))_{x\in B(y_0,\gamma)}$. What we want to
show is that these two pieces of analytic curves coincide after a
translation. Thus we are allowed to add some linear function $kx$ so
that we can assume that $v'(x_0)=v'(y_0)\neq0$. Then by the Inverse
Function Theorem, there is a analytic function $f:
v(B(x_0,\gamma))\rightarrow B(x_0,\gamma)$ such that $f[v(x)]=x,x\in
B(x_0,\gamma)$. Now let's consider the analytic function
$g(x)=f[v(x-x_0+y_0)],x\in B(x_0,\gamma)$. By assumption we have
$$g'(x_0-y_0+y_n)=f'[v(y_n)]v'(y_n)=f'[v(x_n)]v'(x_n)=1,n\geq N,$$
where $N$ is some large positive integer. Since
$g(x_0)=f[v(y_0)]=x_0$, we have $g(x)=f[v(x-x_0+y_0)]=x$. By
analyticity, we must have that $v(x-x_0+y_0)=v(x),x\in
B(x_0,\gamma)$ and hence $v(x-x_0+y_0)=v(x),x\in\R/\Z$. Since
$x_0-y_0\in\R/\Z$ is not zero, this contradicts with the assumption
that $v(x)$ is of period 1. Thus Theorem B$'$ implies the corollary
7 for this kind of $v$.

Finally if the least positive period of $v$ is $\frac{1}{n}, n>1$,
then we can instead consider the dynamical system
$(n\alpha,A^{(E-\lambda v(\frac{1}{n}\cdot))})$. Indeed we have the
following facts
\begin{enumerate}
\item $L(\alpha,A^{(E-\lambda v)})=L(n\alpha,A^{(E-\lambda
v(\frac{1}{n}\cdot))})$;
\item $(\alpha,A^{(E-\lambda v)})\in\CU\CH\Leftrightarrow(n\alpha,A^{(E-\lambda
v(\frac{1}{n}\cdot))})\in\CU\CH$;
\item $\alpha$ is {\em Brjuno} $\Rightarrow$ $n\alpha$ is {\em Brjuno}.
\end{enumerate}
These facts obviously reduce the this case to the case that $v$ is
of period 1 and hence prove the corollary.

For the proof of these facts, $(1)$ is straightforward. $(2)$
follows from the fact that $u$ is the unstable direction of
$(\alpha,A^{(E-\lambda v)})$ if and only if $u(\frac{1}{n}\cdot)$ is
the unstable direction $(n\alpha,A^{(E-\lambda
v(\frac{1}{n}\cdot))})$. Same holds for the stable direction $s$.
Finally if $\alpha$ is Brjuno, then for large $s$ the sth continued
fraction approximant $q_s(n\alpha)$ of $n\alpha$ is some
$c_s(n)q_l(\alpha)$, where $q_l(\alpha)$ is the lth continued
fraction approximant of $\alpha$. Then it's not very difficult to
see that $q_{s+1}(n\alpha)=c_{s+1}(n)q_{l+1}(\alpha)$. Here
$\{c_s(n)\}_{s\geq1}$ is a sequence of constants depending only on
$n$ and there is a constant $c>0$ such that
$c^{-1}<c_s(n)<c,s\geq1$. This obviously implies that
$$\sum^{\infty}_{s=1}\frac{\ln q_{s+1}(n\alpha)}{q_{s}(n\alpha)}<\infty,$$ that is
$n\alpha$ is a {\em Brjuno number}. This completes the proof of
Corollary 7.
\end{proof}
The same estimate in Corollary 7 is obtained in \cite{eliasson} (see
\cite{eliasson}, Theorem) for a class of Gevrey potentials and {\em
Diophantine} frequencies, where the class of potentials also
includes all real analytic functions.

\subsection{\textbf{Outline of the remaining part of the paper}}
In Section 3 we introduce $acceleration$ for analytic $SL(2,\C)$
cocycles: we state a main theorem about it and prove a weaker
version which will be enough for this paper. Then we construct a
class of uniformly hyperbolic systems. In Section 4, based on
Section 3, we prove Theorems A and A$'$. In Section 5, we first
state the main theorem in \cite{young} in a slightly different form,
which makes the application easier. Then we use it to prove Theorems
B and B$'$. Finally in Section 6 we
end with some discussion.\\

\begin{raggedright}$\mathbf{Acknowledgements}$.\end{raggedright}
I would like to thank my advisors Artur Avila and Amie Wilkinson for
providing me with the problem of uniform positive Lyapunov exponents
for quasi-periodic Szeg\H o cocycles, sharing with me their ideas
and lots of helpful discussions. I am grateful to Michael Goldstein
and Vadim Kaloshin for some useful discussions and to Fields
Institute for hospitality. I also would like to thank the referees
for many detailed and helpful comments on how to improve the
presentation.

\section{\textbf{Acceleration and uniform hyperbolicity}}
In this section we give a brief introduction of $acceleration$ and
some related results. Then we construct a class of uniformly
hyperbolic systems.
\subsection{\textbf{Quantization of acceleration}}
The main tool we are going to use is the $acceleration$ of
quasiperiodic analytic $SL(2,\C)$ cocycles, which is first
introduced in \cite{avila2}, where lots of important results about
it have been proved. We will only introduce what we need here. If
$A\in C_{\delta}^{\omega}(\R/\Z, SL(2,\C))$, then for each $y\in
(-\delta, \delta)$ we can define $A_y\in C^{\omega}(\R/\Z,
SL(2,\C))$ by $A_y(x)=A(x+iy)$. Then the $acceleration$ is defined
by
$$\omega(\alpha, A)=\lim\limits_{y\rightarrow 0^+}
\frac{1}{2\pi y}(L(\alpha, A_y)-L(\alpha, A)),$$ where the existence
of the limit is guaranteed by the fact that Lyapunov exponent
$L(\alpha, A_y)$ is a convex function of $y$. Indeed, setting
$l(y)=L(\alpha, A_y)$, we have
$$l(y)=\inf_k\frac{1}{2^k}\int_{\R/\Z}\ln\|A_{2^k}(x+iy)\|dx.$$
Thus if we complexify $y$, then $l(y)$ is the limit of a decreasing
sequence of subharmonic functions which implies itself is also
subharmonic; furthermore the change of variable $x\mapsto x+\Im y$
in the integral shows that $l(y)$ is independent of $\Im y$. These
together imply that $l(y)$ is convex in $\Re y$. Note also this fact
holds for all $\alpha$. While we will not use it explicitly, the
following result underlies the philosophy of the proof of Theorems A
and A$'$.

\begin{theorem}[Acceleration is quantized]
The $acceleration$ of analytic $SL(2,\C)$ cocycles with irrational
frequency is always an integer.
\end{theorem}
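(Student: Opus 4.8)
The plan is to prove that the acceleration $\omega(\alpha,A)$ is an integer by exploiting the structure of $L(\alpha,A_y)$ as a function of $y$ together with the subadditive/subharmonic machinery recalled just above the statement. Recall that $l(y)=L(\alpha,A_y)$ is convex and piecewise linear in a neighbourhood of each point (this piecewise-linearity being itself a substantive fact one must establish), and that $\omega(\alpha,A)$ is the right-derivative of $l$ at $y=0$ divided by $2\pi$. So the claim is equivalent to saying that the slopes of the convex piecewise-linear function $2\pi\, l(y)$ are integers.

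First I would reduce to a uniform statement: by upper semicontinuity of $L$ in $(\alpha,A)$ and the fact that acceleration is constant on the complement of a discrete set in $y$, it suffices to understand the slope of $l$ at a point $y_0$ where $(\alpha,A_{y_0})$ is \emph{uniformly hyperbolic} — because the set of $y$ with $(\alpha,A_y)\notin\CU\CH$ is small, and near a hyperbolic parameter one has a holomorphic (in the complexified $y$) invariant section and a clean formula $L(\alpha,A_y)=\int_{\R/\Z}\ln\|A_y(x)u_y(x)\|/\|u_y(x)\|\,dx$ up to the evident normalization. The key computation is then: the derivative in $y$ of this expression picks up, via the Cauchy–Riemann equations and the fact that $l$ is independent of $\Im y$, a \emph{winding number} of a holomorphic $\C\PP^1$-valued map around the cylinder. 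More precisely, differentiating $2\pi\,l$ across an interval of hyperbolic parameters and integrating, the increment equals the degree of the map $x\mapsto u_{y}(x)\in\C\PP^1$ — an integer — as $y$ crosses that interval. I would set this up by writing the unstable direction in an affine chart, noting that its logarithmic derivative integrates to $2\pi i$ times a winding number, and that the real part of the contour integral of $\partial_z \ln(\text{something holomorphic})$ over the circle $\{x+iy_0\}$ computes exactly the jump in slope.

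The main obstacle is handling the parameters $y$ at which uniform hyperbolicity fails (equivalently, where $l$ has a corner that is \emph{not} explained by hyperbolicity on both sides, or where $L=0$ on an interval). Here one cannot use the holomorphic invariant section directly. The remedy — following the philosophy of \cite{avila2} — is to perturb: replace $A$ by $A$ composed with a small translation cocycle $R_\theta$ or by $z\mapsto A(z)\cdot\text{(small holomorphic conjugacy)}$ so that, for a full-measure set of the perturbation parameter, uniform hyperbolicity holds at the relevant $y_0$; then show $\omega$ is continuous along the perturbation (using that it is quantized, hence locally constant, on a dense set) and pass to the limit. One must check that the perturbation does not change the acceleration, which again uses convexity of $l$ and the fact that an integer-valued function that is a limit of integer-valued functions and varies upper-semicontinuously is locally constant.

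Finally I would assemble the pieces: (i) establish that $2\pi\, l(y)$ is convex and piecewise linear in $\Re y$ with finitely many corners on any compact $y$-interval (subharmonicity plus a uniform large-deviation or almost-reducibility input); (ii) on each linear piece lying in the hyperbolic locus, identify the slope with the degree of the unstable section, an integer; (iii) show the set of non-hyperbolic $y$ on which the slope might fail to be an integer is handled by the perturbation argument, forcing the slope to be an integer there too by a limiting/quantization argument; (iv) conclude that $\omega(\alpha,A)=\lim_{y\to0^+}\frac{1}{2\pi y}(l(y)-l(0))$ equals the slope of the rightmost relevant linear piece, hence an integer. The one genuinely delicate point — and where I expect to spend the most effort — is step (iii), i.e.\ showing the perturbation can always be taken to land in $\CU\CH$ without moving $\omega$; everything else is either convexity bookkeeping or the winding-number identity, which is a short residue computation once the holomorphic section is in place.
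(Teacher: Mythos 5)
First, a point of comparison you could not have known: the paper does not actually prove this theorem. Theorem 8 is quoted from \cite{avila2}, and the paper remarks that its proof relies on the continuity of the Lyapunov exponent on $((\R/\Z)\setminus\Q)\times C^{\omega}_{\delta}(\R/\Z,SL(2,\C))$ established in \cite{jitomirskaya}; what the paper proves (Theorem 9) is only the uniformly hyperbolic case, by analytically diagonalizing $A$ via the invariant directions and identifying the acceleration with the winding number of the unstable eigenvalue $r(x)^{-1}$. Your computation on the $\CU\CH$ locus is essentially that argument, but with one genuine slip: the slope is the winding number of the eigenvalue function $r$ (defined by $A(z)\cdot u(z)=u(z+\alpha)$ with expansion factor $r(z)$), \emph{not} the degree of the section $x\mapsto u_y(x)\in\C\PP^1$. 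For $A(x)=\begin{pmatrix}\lambda e^{2\pi ix}&0\\0&\lambda^{-1}e^{-2\pi ix}\end{pmatrix}$, $\lambda>1$, the unstable direction is constant (degree $0$), yet $L(\alpha,A_y)=\ln\lambda-2\pi y$ near $y=0$, so $\omega=-1$, which is the winding of $r^{-1}$; so the "degree of $u_y$" identification must be replaced by the eigenvalue-winding computation (this is fixable, and then you recover exactly Theorem 9).

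The serious gap is the reduction of the general statement to the hyperbolic one. The set of $y$ with $(\alpha,A_y)\notin\CU\CH$ need not be small: in the subcritical regime $L(\alpha,A_y)$ can vanish identically on an interval of $y$ (e.g.\ almost Mathieu with $|\lambda|<1$ and $E$ in the spectrum), so there are no nearby hyperbolic parameters from which to read off the slope. The proposed perturbation also fails: for $A$ taking values in rotations, $AR_\theta$ is never uniformly hyperbolic for \emph{any} $\theta$, and in general the non-$\CU\CH$ parameter set behaves like a spectrum and can have positive measure, so "a full-measure set of perturbation parameters lands in $\CU\CH$" is false. Moreover, your steps (i) and (iii) are circular: piecewise linearity of $l$ with finitely many corners, and "acceleration is quantized, hence locally constant, on a dense set", are in \cite{avila2} \emph{consequences} of quantization, not available inputs. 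The actual proof in \cite{avila2} does not perturb into hyperbolicity at all; as the paper indicates, it exploits continuity of the Lyapunov exponent jointly in frequency and cocycle together with (rational-frequency) approximation, where the exponent is controlled through holomorphic data without any hyperbolicity hypothesis. As it stands, your proposal proves (after the winding-number correction) the paper's Theorem 9, but not Theorem 8.
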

Note that one immediate consequence of Theorem 8 is that $l(y)$ is
piecewise linear for irrational frequency. The proof of Theorem 8
uses continuity of Lyapunov exponents on
$$((\R/\Z)\setminus\Q)\times\C_{\delta}^{\omega}(\R/\Z, SL(2,\C)),$$
which is proved in \cite{jitomirskaya} (see \cite{jitomirskaya},
Corollary 2). But what we need here is the analogous result in
$\CU\CH$ case, where the frequency can also be rational. As remarked
in \cite{avila2}, the proof in this case is much easier. Let's
formulate and prove it.

\begin{theorem}
The $acceleration$ $\omega(\alpha, A)$ is integer-valued and is
$C^{\infty}$ on
$$\CU\CH_{\omega}=\CU\CH\cap[\R\times C^{\omega}(\R/\Z,SL(2,\C))].$$ Thus $\omega(\alpha,A)$
is constant on any connected component of $\CU\CH_{\omega}$.
\end{theorem}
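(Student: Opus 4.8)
The plan is to exploit a feature special to $\CU\CH$: there the Lyapunov exponent is given by an explicit integral of the log-modulus of the \emph{unstable multiplier}, a quantity depending analytically on all the data. This will yield integrality (via a winding number) and local constancy at one stroke, with no small-divisor input — which is precisely why the $\CU\CH$ version is easier than Theorem 8.

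First I would recall that $\CU\CH$ is $C^{0}$-open and that the invariant sections $u,s:\R/\Z\to\C\PP^{1}$ persist and vary continuously under $C^{0}$-perturbations of $(\alpha,A)$, being the attracting/repelling fixed sections of the graph transform, a uniform contraction in the hyperbolic metric of $\C\PP^{1}$. Hence for $(\alpha,A)\in\CU\CH_{\omega}$ with $A\in C^{\omega}_{\delta}(\R/\Z,SL(2,\C))$ there is $\varepsilon>0$ with $(\alpha,A_{y})\in\CU\CH$ for all real $|y|<\varepsilon$; writing $w=x+iy$, the unstable sections assemble into a \emph{holomorphic} map $u$ on the strip $\{|\Im w|<\varepsilon\}\subset\C/\Z$ (a locally uniform limit of iterated M\"obius maps, each holomorphic in $w$) satisfying $A(w)\cdot u(w)=u(w+\alpha)$ and depending analytically on $(\alpha,A)$. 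Lifting $u$ to a holomorphic section $\xi(w)$ of the unstable line gives a holomorphic, nowhere-vanishing, $1$-periodic scalar $a(w)$ with $A(w)\xi(w)=a(w)\,\xi(w+\alpha)$ (the lift is chart-dependent, but only up to a coboundary, which will affect neither of the two quantities below).

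Next, iterating the relation $A(w)\xi(w)=a(w)\xi(w+\alpha)$ and invoking uniform hyperbolicity together with unique ergodicity of $R_{\alpha}$ (for irrational $\alpha$; the same computation along the periodic orbit when $\alpha$ is rational), I would derive
$$L(\alpha,A_{y})=\int_{\R/\Z}\log|a(x+iy)|\,dx,\qquad |y|<\varepsilon,$$
which already exhibits $L$ as real-analytic, hence continuous, on $\CU\CH_{\omega}$. Now set $l(y)=L(\alpha,A_{y})$. Since $\log|a(\cdot)|$ is harmonic in $w$ and $1$-periodic in $\Re w$, differentiating under the integral sign and using $\partial_{y}^{2}\log|a(x+iy)|=-\partial_{x}^{2}\log|a(x+iy)|$ gives $l''(y)=-\int_{0}^{1}\partial_{x}^{2}\log|a(x+iy)|\,dx=0$, so $l$ is affine near $0$ and $\omega(\alpha,A)=\tfrac{1}{2\pi}l'(0)$. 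The Cauchy--Riemann equations turn $\partial_{y}\log|a|$ into $-\partial_{x}\arg a$, so
$$\omega(\alpha,A)=-\frac{1}{2\pi}\int_{0}^{1}\partial_{x}\arg a(x)\,dx=-\deg\big(x\in\R/\Z\longmapsto a(x)\in\C^{*}\big)\in\Z.$$
Finally, the degree on the right is locally constant under $C^{0}$-perturbations of the loop $a$, and $a$ depends continuously on $(\alpha,A)$ over $\CU\CH_{\omega}$; thus $\omega$ is locally constant there, in particular $C^{\infty}$, and constant on each connected component.

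I expect the conceptual point — that uniform hyperbolicity forces $l(y)$ to be affine with integer slope equal to minus a winding number — to be short once set up. The main technical obstacle will be the first step: showing the unstable section extends holomorphically in $w$ and depends continuously (indeed analytically) on $(\alpha,A)$, i.e.\ the "graph transform is a uniform contraction on $\C\PP^{1}$" argument with care about working across charts of $\C\PP^{1}$ and about joint analyticity; and verifying the integral formula for $L$, particularly in the rational-frequency case, where $L$ records the exponential growth rate along a periodic orbit rather than an ergodic average.
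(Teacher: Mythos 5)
Your proposal is correct and takes essentially the same route as the paper: both express the Lyapunov exponent on $\CU\CH$ as the integral of the log of the analytic unstable multiplier (your $a$, the paper's $r$), both note that this multiplier extends holomorphically to a strip so that it also controls $L(\alpha,A_y)$ for small $y$, and both identify the acceleration with a winding number of that multiplier, hence an integer. The only minor differences are cosmetic: you prove the holomorphic extension of the invariant section via the graph transform rather than citing Avila's lemma, and you get smoothness and constancy on components from local constancy of the degree, whereas the paper quotes the $C^{\infty}$ regularity of $L$ on $\CU\CH_{\omega}$ and writes $\omega=\frac{1}{2\pi}\frac{\partial}{\partial y}L(\alpha,A_y)$.
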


\begin{proof}
For $(\alpha, A)\in\CU\CH_{\omega}$, we have that the unstable and
stable directions $u,s\in C^{\omega}(\R/\Z,\C\PP^1)$ (see
\cite{avila2}, Lemma 10). Then we can let $B:\R/\Z\rightarrow
SL(2,\C)$ be analytic with column vectors in $u(x)$ and $s(x)$. Then
$B$ diagonalizes A as
$$B(x+\alpha)^{-1}A(x)B(x)=\begin{pmatrix}r(x)& 0\\0&r(x)^{-1}\end{pmatrix}.$$
Then it's easy to see that
$$L(\alpha, A)=\lim\limits_{n\rightarrow\infty}\frac{1}{n}\sum^{n-1}_{j=0}
\int_{\R/\Z}\Re\ln r(x+j\alpha)dx=\int_{\R/\Z}{\Re\ln r(x)}dx.$$
Note that $r:\R/\Z\rightarrow\C$ is also analytic. By openness of
$\CU\CH$ and analyticity, we have that the corresponding upper-left
entry of the diagonalized cocycle for $A_y(x)$ can be chosen to be
$r(x+yi)$. Thus
\begin{align*}
\omega(\alpha,A)&=\lim\limits_{y\rightarrow 0^+}\frac{L(\alpha,
A_y)-L(\alpha, A)}{2\pi y}\\
&=\Re\int_{\R/\Z}\lim\limits_{y\rightarrow 0}\frac{\ln r(x+yi)-\ln
r(x)}{2\pi y} dx\\
&=\Im \frac{1}{2\pi}\int_{\R/\Z}d\ln r^{-1}(x)
\end{align*}
is the winding number of $r^{-1}$ about the origin and hence an
integer.

We also have that $L(\alpha,A)$ is $C^{\infty}$ on $\CU\CH_{\omega}$
(see \cite{avila2}, section 1.3 for a detailed discussion). Thus
$$\omega(\alpha,A)=\frac{1}{2\pi}\frac{\partial}{\partial y}L(\alpha, A_y)$$
is also $C^{\infty}$ on $\CU\CH_{\omega}$.
\end{proof}

It's interesting to note that using Theorem 9 alone, we will give a
self-contained proof of Theorem A and Theorem A$'$ in our paper.

\subsection{\textbf{A class of uniformly hyperbolic systems}}
The main tool we are going to use in this section is the polar
decomposition of $SL(2,\C)$ matrices, which will enable us to
construct a class of uniformly hyperbolic systems. For $A\in SL(2,
\C)$ it's a standard result that we can decompose it as
$A=U_1\sqrt{A^*A}$, where $U_1\in SU(2)$ and $\sqrt{A^*A}$ is a
positive Hermitian matrix. We can further decompose $\sqrt{A^*A}$ as
$\sqrt{A^*A}=U_2\Lambda U_{2}^{*}$, where column vectors of $U_2$
are eigenvectors of $A^*A$ (thus $U_2$ can be chosen such that
$U_2\in SU(2)$) and $\Lambda=diag(\|A\|, \|A\|^{-1})$. Thus
$A=U_1U_2\Lambda U_2^*$. By this decomposition procedure and after
some fixed choice of $U_2$, we can consider $U_1, U_2$ and $\Lambda$
as maps from $SL(2,\C)$ to $SL(2,\C)$ so that for each $A\in
SL(2,\C)$
$$A=U_1(A)U_2(A)\Lambda(A)U_2^*(A).$$ Then we have the following
claim:
\begin{lemma}For some suitable choices of column vectors of
$U_2(A)$, we have
$$U_1,U_2,\Lambda:SL(2,\C)\setminus SU(2)\rightarrow SL(2,\C)$$ are
all $C^{\infty}$ maps. Here $C^{\infty}$ is in the sense that all
these maps are between real manifolds.
\end{lemma}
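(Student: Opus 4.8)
The plan is to reduce the whole statement to the smooth dependence of the eigenvalues and eigenvectors of the positive Hermitian matrix $M:=A^*A$ on $A$, using that $A\mapsto A^*A$ is polynomial in the entries of $A$ and $\bar A$, hence real-analytic, as a map from $SL(2,\C)$ to the cone of positive-definite Hermitian matrices of determinant $1$.

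First I would dispose of $\Lambda$ and $U_1$, which in fact extend to globally smooth maps. The eigenvalues of $M$ are $\|A\|^{\pm2}$; since $\det M=1$ they equal $\tfrac12\big(tr\,M\pm\sqrt{(tr\,M)^2-4}\big)$. One checks that $A\in SU(2)\iff M=\Id\iff tr\,M=2$ (indeed $tr\,M=\|A\|^2+\|A\|^{-2}\geq 2$ with equality only when $M=\Id$), so on $SL(2,\C)\setminus SU(2)$ the discriminant $(tr\,M)^2-4$ is strictly positive and its square root is $C^\infty$ there; hence $\|A\|$, $\|A\|^{-1}$ and $\Lambda(A)=\mathrm{diag}(\|A\|,\|A\|^{-1})$ are $C^\infty$ on $SL(2,\C)\setminus SU(2)$. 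For $U_1$ I would use the closed form of the square root of a positive-definite $2\times2$ matrix: because $\det M=1$ one has $\sqrt M=(M+\Id)/\sqrt{tr\,M+2}$, a real-analytic function of $A$ on all of $SL(2,\C)$; therefore $U_1(A)=A(\sqrt M)^{-1}=\sqrt{tr\,M+2}\;A\,(M+\Id)^{-1}$ is $C^\infty$ on all of $SL(2,\C)$, \emph{a fortiori} on the complement of $SU(2)$.

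The only real point is $U_2$, whose columns form an orthonormal eigenbasis of $M$; here I would work locally, which suffices because smoothness is a local property and is the sense in which one should read ``suitable choices of column vectors''. Fix $A_0\in SL(2,\C)\setminus SU(2)$, write $M=\begin{pmatrix}a&b\\\bar b&d\end{pmatrix}$, and let $\mu=\|A\|^2$ be the larger — hence simple — eigenvalue, which is $C^\infty$ near $A_0$ by the previous step. The $\mu$-eigenline is $\ker(M-\mu\Id)$, a rank-one kernel; the two vectors $(b,\,\mu-a)$ and $(\mu-d,\,\bar b)$ both lie in it, and they cannot both vanish (that would force $b=0$ and $a=\mu=d$, i.e. $M=\mu\Id$, contradicting simplicity of $\mu$). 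Pick whichever of the two is nonzero at $A_0$; it stays nonzero and depends smoothly on $A$ in a neighborhood of $A_0$, and dividing by its smooth, nonvanishing Euclidean norm yields a $C^\infty$ unit eigenvector $u(A)=(u_1(A),u_2(A))$. Declare $u(A)$ to be the first column of $U_2(A)$ and $(-\overline{u_2(A)},\,\overline{u_1(A)})$ the second; then $U_2(A)\in SU(2)$, its columns are an orthonormal eigenbasis of $M$ (the second column is the unit vector orthogonal to $u(A)$, hence spans the $\mu^{-1}$-eigenline since $M$ is Hermitian), and $U_2$ is $C^\infty$ near $A_0$. One then verifies $U_2\Lambda U_2^{*}=\sqrt M$ and $U_1=A\,U_2\Lambda^{-1}U_2^{*}$, so the decomposition is the one in the statement; since $A_0$ was arbitrary, all three maps are $C^\infty$ on $SL(2,\C)\setminus SU(2)$.

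The main obstacle — and the reason $SU(2)$ is excluded and only a ``suitable'' (that is, local) choice of eigenvectors is claimed — is exactly this eigenvector selection: along $SU(2)$ the two eigenvalues of $A^*A$ collide, the $\mu$-eigenline ceases to be defined, and no continuous choice exists; even on the complement there is in general no canonical smooth global choice of $U_2$, but any local one as above does the job, and that is all that is needed downstream. Everything else is the elementary fact that the square root of a positive-definite matrix depends smoothly, indeed real-analytically, on the matrix, made completely explicit in the $2\times2$ determinant-one case by the formula for $\sqrt M$ above.
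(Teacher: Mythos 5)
Your argument is correct, and while it shares the paper's treatment of $\Lambda$ (both extract $\|A\|^2=\tfrac12\bigl(tr(A^*A)+\sqrt{tr(A^*A)^2-4}\bigr)$ and use $tr(A^*A)>2$ off $SU(2)$), it diverges in the two places that actually matter. For $U_2$ the paper writes down a single explicit eigenvector matrix
$U_2'=\begin{pmatrix}\bar ab+\bar cd&\bar ab+\bar cd\\ \|A\|^2-|a|^2-|c|^2&\|A\|^{-2}-|a|^2-|c|^2\end{pmatrix}$
and normalizes by $\sqrt{\det U_2'}$, asserting the determinant is nonzero off $SU(2)$; you instead select, locally near each $A_0$, whichever of the two kernel vectors $(b,\mu-a)$, $(\mu-d,\bar b)$ of $M-\mu\Id$ is nonvanishing and normalize it. Your local route is more robust: the paper's determinant is $(\bar ab+\bar cd)(\|A\|^{-2}-\|A\|^2)$, which vanishes whenever $A^*A$ is diagonal but not the identity, so the explicit global formula degenerates exactly where your case distinction kicks in, and your $U_2$ is moreover genuinely in $SU(2)$ (orthonormal columns), matching the description preceding the lemma, whereas determinant normalization alone only gives $\det=1$. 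The price is that your choice is local, so for a global statement (or for a section along $x\in\R/\Z$ in the later application) one must patch the local phases — harmless here, and consistent with the lemma's ``suitable choices'' proviso, but worth saying explicitly. For $U_1$ the paper simply sets $U_1=AU_2\Lambda^{-1}U_2^*$, inheriting smoothness from $U_2$ and $\Lambda$; your closed form $\sqrt{A^*A}=(A^*A+\Id)/\sqrt{tr(A^*A)+2}$ gives the stronger (and cleaner) conclusion that $U_1=A(\sqrt{A^*A})^{-1}$ is real-analytic on all of $SL(2,\C)$, independently of the eigenvector issue, which isolates the eigenvalue collision on $SU(2)$ as the only genuine obstruction — precisely the point your closing paragraph makes.
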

\begin{proof}
Let $A=\begin{pmatrix}a&b\\c&d\end{pmatrix}\in SL(2,\C)\setminus
SU(2)$. First note that by the above decomposition procedure we know
that $\|A\|^2, \|A\|^{-2}$ are two eigenvalues of $A^*A$, thus
$$tr(A^*A)=|a|^2+|b|^2|+|c|^2+|d|^2=\|A\|^2+\|A\|^{-2}>2\mbox{ and}$$
$$\|A\|^2=\frac{1}{2}(tr(A^*A)+\sqrt{tr(A^*A)^2-4}).$$ Thus
$\|A\|^2$ and hence $\|A\|$ are $C^{\infty}$ on $SL(2,\C)\setminus
SU(2)$. This proves that $$\Lambda\in C^{\infty}(SL(2,\C)\setminus
SU(2),SL(2,\C)).$$ Let $U_2'$ be a matrix that diagonalizes $A^*A$.
Then the column vectors are solutions of the equations
$$(A^*A-\|A\|^2I_2)\binom{x_1}{x_2}=0\mbox{ and }(A^*A-\|A\|^{-2}I_2)\binom{x_1}{x_2}=0,$$
where $I_2$ is the $2\times2$ identity matrix. Thus we can choose
$U_2'$ to be $$\begin{pmatrix}\bar ab+\bar cd,&\bar ab+\bar
cd\\\|A\|^2-|a|^2-|c|^2,&\|A\|^{-2}-|a|^2-|c|^2\end{pmatrix},$$
which has nonzero determinant since $A\notin SU(2)$. Then we can
choose $U_2$ as $$U_2(A)=\frac{1}{\sqrt{\det(U_2'(A))}}U'_2(A).$$
This shows that $$U_2\in C^{\infty}(SL(2,\C)\setminus
SU(2),SL(2,\C)).$$ Finally since
$U_1(A)=AU_2(A)\Lambda^{-1}(A)U_2^*(A)$, we have
$$U_1\in C^{\infty}(SL(2,\C)\setminus
SU(2),SL(2,\C)).$$
\end{proof}

Now we consider $(\alpha, A)$, where $\alpha\in\R/\Z$ and $A\in
C^r(\R/\Z, SL(2,\C))$ with $r\in\N\cup\{\infty,\omega\}$. Let's
assume further that $A(x)$ is $SU(2)$ free. As in Lemma 10, we can
decompose $A(x)$ as $A(x)=U_1(x)U_2(x)\Lambda(x) U_2^*(x)$. Then by
Lemma 10, $U_1(x)$, $U_2(x)$ and $\Lambda(x)$ are $C^r, 0\leq
r\leq\infty$ and $C^{\infty}, r=\omega$ in $x\in\R/\Z$. Let
$U_3(x)=U_1(x-\alpha)U_2(x-\alpha)\in SU(2)$. Then we have
$$U_3(x+\alpha)^*A(x)U_3(x)=\Lambda(x)U(x),$$
where $U(x)=U_2(x)^*U_1(x-\alpha)U_2(x-\alpha)\in SU(2)$. This is
equivalent to that $$(0,U_3)^{-1}(\alpha, A)(0,U_3)=(\alpha,\Lambda
U).$$ Thus we can instead consider the dynamical system
$(\alpha,\Lambda U)$. Note that $\alpha$ has already been involved
in when we transform $(\alpha, A)$ to $(\alpha,\Lambda U)$.

Now let $U(x)=
\begin{pmatrix}
c(x)& -\overline{d(x)}\\
d(x)& \overline{c(x)}
\end{pmatrix}$, so $|c(x)|^2+|d(x)|^2=1$, then we have the following lemma

\begin{lemma}
Let $(\alpha, A)$ be a $SU(2)$ free system with the equivalent
system $(\alpha, \Lambda U)$, where $U$ is as above; assume there
exists a $0<\gamma<1$ such that
$\inf\limits_{x\in\R/\Z}|c(x)|\geq\gamma$; let
$\rho=\frac{1}{\gamma}+\sqrt{\frac{1}{\gamma^2}-1}>1$. If
$$\inf\limits_{x\in\R/\Z}\|A(x)\|=\lambda>\rho,$$ then $(\alpha,
A)\in\CU\CH$. Moreover, we have that
$$L(\alpha,A)\geq\ln\lambda-\ln2\rho$$ for all $\lambda\in(\rho,\infty)$.
\end{lemma}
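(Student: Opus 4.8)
The plan is to work with the equivalent system $(\alpha,\Lambda U)$ and exhibit invariant cone fields, then show they are contracted/expanded at an exponential rate. Write $\Lambda(x)U(x) = \begin{pmatrix}\lambda(x) & 0\\ 0 & \lambda(x)^{-1}\end{pmatrix}\begin{pmatrix}c(x) & -\overline{d(x)}\\ d(x) & \overline{c(x)}\end{pmatrix}$ where $\lambda(x) = \|A(x)\|\geq\lambda>\rho$. The key geometric point is that the diagonal matrix $\Lambda(x)$ strongly expands the horizontal direction and strongly contracts the vertical one, while $U(x)\in SU(2)$ is a bounded rotation that, by the hypothesis $|c(x)|\geq\gamma$, does not rotate the horizontal axis too far toward the vertical. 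Concretely, I would fix a small constant, say the cone $\CK = \{(z_0,z_1): |z_1|\le \kappa |z_0|\}$ for a suitable $\kappa = \kappa(\gamma,\rho)$, and check that $\Lambda(x)U(x)\cdot\CK\subset\CK$ with room to spare: applying $U(x)$ to a vector in $\CK$ keeps $|z_0|$ comparable to the norm because $|c(x)|\geq\gamma$ (so $|c(x)z_0 - \overline{d(x)}z_1|\ge \gamma|z_0| - |z_1|\ge(\gamma-\kappa)|z_0|$), and then $\Lambda(x)$ multiplies $z_0$ by $\lambda(x)\ge\lambda$ and $z_1$ by $\lambda(x)^{-1}\le\lambda^{-1}$, so the new slope is at most $\lambda^{-2}\kappa/(\gamma-\kappa)\cdot(\text{bounded})$. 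Choosing $\kappa$ so that $\rho = \tfrac1\gamma+\sqrt{\tfrac1{\gamma^2}-1}$ is exactly the threshold that makes this self-map strict gives invariance of $\CK$ (and symmetrically of the complementary vertical cone $\CK'$ under inverses), yielding the unstable section $u(x)$ as $\bigcap_n A_n(x-n\alpha)\cdot\CK$ and likewise $s(x)$; continuity of $u,s$ follows from uniform contraction of the cone under iteration, which is a standard graph-transform argument.

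Next I would extract the exponential rate. For a unit vector $w\in s(x)$, I need $\|A_n(x)w\|\le C\lambda'^{-n}$; the natural estimate is that each application of $\Lambda(y)U(y)$ to a vector in the stable cone multiplies its norm by at most roughly $\lambda(y)^{-1}\cdot(\text{distortion from }U)$. The distortion from $U$ is controlled by $|c(y)|\ge\gamma$: writing out how $U$ acts on a vector nearly vertical, the norm is multiplied by a factor bounded by something like $\rho$ (this is exactly where $\rho = \tfrac1\gamma + \sqrt{\tfrac1{\gamma^2}-1}$ enters — it is the operator-norm-type bound for the "worst" rotation compatible with $|c|\ge\gamma$ acting across the cone). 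Hence each step contracts the stable direction by at least a factor $\lambda/\rho > 1$ in modulus after accounting for distortion, giving $\|A_n(x)w\|\le C(\lambda/\rho)^{-n}$; since $\rho>1$ and $\lambda>\rho$ this is genuine exponential decay, and the symmetric argument handles $A_{-n}$ on the unstable direction. This establishes $(\alpha,A)\in\CU\CH$.

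Finally, for the quantitative lower bound on $L(\alpha,A)$: since $(\alpha,A)\in\CU\CH$, pick the unstable section and a unit vector $w\in u(x)$; then $L(\alpha,A)=\lim\frac1n\ln\|A_n(x)w\|$ (the Lyapunov exponent is realized on the unstable direction). By the per-step estimate above in the expanding direction, each application of $\Lambda(y)U(y)$ multiplies the norm of a vector in the unstable cone by at least $\lambda(y)/\rho \ge \lambda/\rho$; iterating, $\|A_n(x)w\|\ge c(\lambda/\rho)^n$ — wait, I want $\ln\lambda - \ln 2\rho$, so the honest bound to prove per step is a multiplicative factor at least $\lambda/(2\rho)$ (the extra factor $2$ absorbing the additive losses $(\gamma-\kappa)$ versus $\gamma$ and the constant in the cone geometry near its boundary). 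Taking $\frac1n\ln$ and letting $n\to\infty$ gives $L(\alpha,A)\ge \ln(\lambda/2\rho) = \ln\lambda - \ln 2\rho$ uniformly for $\lambda\in(\rho,\infty)$.

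The main obstacle I expect is getting the constants clean: one must verify that the single-step expansion/contraction factor on the invariant cone is bounded below by $\lambda/(2\rho)$ with $\rho$ exactly $\tfrac1\gamma+\sqrt{\tfrac1{\gamma^2}-1}$, rather than some messier expression. This amounts to optimizing over the position of the vector within the cone and over the phase of $c(x),d(x)$ subject to $|c(x)|\ge\gamma$, $|c|^2+|d|^2=1$; the quantity $\tfrac1\gamma+\sqrt{\tfrac1{\gamma^2}-1}$ is recognizable as $(\gamma^{-1}+\sqrt{\gamma^{-2}-1})$, i.e. the larger root of a quadratic coming from the condition that the map $z\mapsto \frac{\lambda^{-2}(\ldots)}{(\ldots)}$ on slopes has a fixed point inside $(0,1)$, so the calculation should close, but it requires care to see that the cone can be chosen uniformly in $x$ and that the $\ln 2$ slack genuinely suffices.
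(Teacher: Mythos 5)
Your plan follows essentially the same route as the paper's proof: the paper also passes to $(\alpha,\Lambda U)$, takes the cone $B(\infty,\lambda_1)=\{z\in\C\PP^1:|z|>\lambda_1\}$ (your slope cone, written in the projective coordinate $z=z_0/z_1$) for $\rho<\lambda_1<\lambda$, shows $U$ cannot push it into $B(0,1/\lambda_1)$ because $|U\cdot z|\ge\frac{t|z|-1}{t+|z|}$ with $t=|c|/|d|\ge\gamma/\sqrt{1-\gamma^2}$ and $\rho$ is exactly the value of $r$ where $\frac{tr-1}{t+r}=\frac{1}{r}$, and then extracts the per-step growth $\ge\lambda/(2\lambda_1)$ of unit vectors in the cone, giving $L(\alpha,A)\ge\ln\lambda-\ln 2\rho$ after letting $\lambda_1\to\rho$. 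The only refinements are the ones you yourself flagged: keep the sharp bound $|d|\le\sqrt{1-\gamma^2}$ (not $|d|\le 1$) in the cone estimate so the threshold comes out exactly at $\rho$, and note that the paper concludes uniform hyperbolicity by citing the equivalence with existence of an invariant cone field (plus conjugacy invariance) rather than by a graph-transform construction of $u$ and $s$.
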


\begin{proof}
The idea is to consider the projectivized dynamics $(\alpha, \Lambda
U\cdot):\R/\Z\times \C\PP^1\rightarrow \R/\Z\times\C\PP^1$ and
construct an invariant cone field. Here again
$\C\PP^1=\C\cup\{\infty\}$ and $\Lambda U\cdot$ acts on $\C\PP^1$ as
M\" obius transformations, which we introduced at the beginning of
section 2.2.

Let $\lambda_1, \lambda_2$ be arbitrary numbers satisfying
$\lambda>\lambda_2>\lambda_1>\rho$; let $B(\infty, r)=\{z\in\C\PP^1,
|z|>r\}$ and $B(0, r)=\{z\in\C\PP^1, |z|<r\}$ for $r>0$. Then we
have the following facts
\begin{enumerate}
\item $(U(x)\cdot B(\infty,\lambda_1))\cap B(0,\frac{1}{\lambda_1})=\varnothing$ and
\item $\Lambda(x)\cdot \bar B(\infty,\frac{1}{\lambda_1})\subset\Lambda(x)\cdot
B(\infty,\frac{1}{\lambda_2})\subset B(\infty,\lambda)$
\end{enumerate}
for all $x\in\R/\Z$. Given this, we have
\begin{align*}
(\Lambda U)\cdot B(\infty,\lambda_1)&\subset\Lambda\cdot
B(0,\frac{1}{\lambda_1})^c
=\Lambda\cdot\bar B(\infty,\frac{1}{\lambda_1})\\
&\subset\Lambda\cdot B(\infty,\frac{1}{\lambda_2})\subset
B(\infty,\lambda)\subset B(\infty,\lambda_1).
\end{align*}

Namely, $B(\infty,\lambda_1)$ is an invariant cone field for
$(\alpha, \Lambda U)$ which is also uniformly contracted into a
sub-disk. Then $(\alpha, \Lambda U)$ and $(\alpha, A)$ is
\textit{uniformly hyperbolic}. Here we use the following two facts
\begin{enumerate}
\item $\CU\CH$ is conjugate invariant. Indeed, if $(\alpha,
A)\in\CU\CH$ and $u,s$ are the two associated invariant sections,
then for arbitrary $B\in C^0(\R/\Z,SL(2,\C))$, $B(x)^{-1}\cdot u(x)$
and $B(x)^{-1}\cdot s(x)$ are the two invariant sections of
$$B(x+\alpha)^{-1}A(x)B(x);$$
\item $(\alpha, A)\in\CU\CH$ is equivalent to the existence of
invariant conefield (see \cite{avila1}, section 2.1).
\end{enumerate}

For the proof of $(1)$ and $(2)$, $(2)$ is obvious. For $(1)$, just
note that when $|c|\geq\gamma$,
$|d|=\sqrt{1-|c|^2}\leq\sqrt{1-\gamma^2}$ and let
$t=\frac{|c|}{|d|}\geq\frac{\gamma}{\sqrt{1-\gamma^2}}$, then
$$|U\cdot(re^{i\theta})|\geq\frac{tr-1}{t+r}>\frac{1}{r}$$
for all $r>\rho$.

In fact, if we consider the function $g(t,r)=\frac{tr-1}{t+r},
t,r>0$, it's symmetrical in $t$ and $r$, and increasing in both
variables. And for $t=\frac{\gamma}{\sqrt{1-\gamma^2}}$,
$g(t,r)=\frac{1}{r}$ exactly at $r=\rho$.

For the estimate of the Lyapunov exponent, note that for $z\in
B(\infty,\lambda_1)$, $|U\cdot z|\geq\frac{1}{\lambda_1}$ by the
above argument. Thus for arbitrarily fixed
$\lambda_1\in(\rho,\lambda)$,
$$\frac{\|\Lambda U(z,1)^t\|}{\sqrt{|z|^2+1}}=\frac{\|\Lambda U(z,1)^t\|}{\|U(z,1)^t\|}\geq
\frac{\|\Lambda
(\frac{1}{\lambda_1},1)^t\|}{\sqrt{\frac{1}{\lambda_{1}^2}+1}}
\geq\sqrt{\frac{\lambda^{-2}+(\frac{\lambda}{\lambda_1})^2}{1+\lambda_{1}^{-2}}}\geq\frac{\lambda}{2\lambda_1}.$$
Thus we have that for $z\in B(\infty,\lambda_1)$
\begin{align*}
L(\alpha,A)&\geq\lim\limits_{n\rightarrow\infty}\frac{1}{n}\ln\|(\Lambda U)_n(x)\frac{(z,1)^T}{\sqrt{|z|^2+1}}\|\\
&\geq\ln\lambda-\ln2\lambda_1,
\end{align*}
where the first inequality follows from the definition of Lyapunov
exponents and invariance of $B(\infty,\lambda_1)$. Now since the
above inequality holds for all $\lambda_1>\rho$, we get
$$L(\alpha,A)\geq\ln\lambda-\ln2\rho.$$
\end{proof}

We continue to use the notation $A(x), U(x), U_1(x), U_2(x),
\Lambda(x), c(x), d(x)$ in the remaining part of this paper.

\section{\textbf{Uniformly positive Lyapunov exponents}}
In this section we prove Theorem A and Theorem A$'$ in an unified
way. Recall that the family of Szeg\H o cocycles are given by
$(\alpha,A^{(e^{2\pi it},\lambda v)}), 0<\lambda<1$, where
$$A^{(E,\lambda v)}(z)=(1-\lambda^2)^{-1/2}
\begin{pmatrix}
\sqrt E& -\frac{\lambda}{\sqrt E}\overline{v(\bar z)}\\
-\lambda\sqrt Ev(z)&\frac{1}{\sqrt E}
\end{pmatrix};$$ the family of Schr\"odinger cocycles are given by
$(\alpha,A^{(\lambda(t-v))}),0<\lambda<\infty$, where
$$A^{(\lambda(t-v)}(z)=
\begin{pmatrix}
\lambda(t-v(z))& -1\\
1& 0
\end{pmatrix}.$$ But for Schr\"odinger cocycles, we will use a slight different form (see Section 4.2).
In both cases we have $z\in\Omega_{\delta}$, where
$$\Omega_{\delta}=\{z=x+yi\in\C/\Z: |y|<\delta\}$$ is the strip to
where the analytic cocycle maps can be extended. Thus they share the
parameters $(z;\alpha,t;\lambda)$. For simplicity, let's write the
cocycle maps as $A^{(t,\lambda)}$ in both cases.  Then the unified
strategy is
\begin{enumerate}
\item As in Lemma 10, we decompose $(\alpha,A)$ as $(\alpha,\Lambda U)$. Then we show that
$\|A^{(t,\lambda)}(z)\|$ are of size $\sqrt{\frac{1}{1-\lambda}}$ in
Szeg\H o case and are of size $\lambda$ in Schrodinger case, where
the largeness is independent of $t$ and $z$. Thus $A^{(t,\lambda)}$
are $SU(2)$ free for $\lambda$ close to 1 in Szeg\H o case and for
$\lambda$ sufficiently large in Schr\"odinger case. We also compute
the upper left entry $c(z;\alpha,t;\lambda)$ of $U$ explicitly.
\item Reduce the uniform positivity (uniform in $z$) of $|c(z;\alpha,t;\lambda)|$ to that
of some holomorphic function $|g(z;\alpha,t)|$ for $\lambda=1$ in
Szeg\H o case and for $\lambda=\infty$ in Schr\"odinger case. Then
we can fix in each case a compact region $\CK\subset\R/\Z\times I$
of $(\alpha,t)$, $I\subset\R$ is a compact interval, with the
following property. For each $(\alpha,t)\in\CK$, the algebraic set
$\{z\in\Omega_{\delta}:g(z;\alpha,t)=0\}$ is finite.
\item Then by the fact
$$\lim\limits_{\lambda\rightarrow\lambda_0}\|c(x+iy;\alpha,t;\lambda)-
c(x+iy;\alpha,t;\lambda_0)\|_{x\in\R/\Z}=0,$$ where $\lambda_0=1$ in
Szeg\H o case and $\lambda=\infty$ in Schr\"odinger case, we show
that for each $(\alpha_j,t_j)\in\CK$, there is some small connected
open set $\CO_{j}\subset\R/\Z\times I$ containing $(\alpha_j,t_j)$,
some $\lambda_j>0$ and height $y_j$ such that
$|c(x+iy_j;\alpha,t;\lambda)|$ is uniform bounded away from zero for
all
$(x;\alpha,t;\lambda)\in\R/\Z\times\CO_j\times[\lambda_j,\infty)$.
\item Using Lemma 11 to get uniform hyperbolicity and estimate
the Lyapunov Exponents.
\item Using compactness of $\CK$ to get finitely many $j$, say $j=1,\cdots,l$,
such that $\CK\subset\bigcup_j\CO_j$. Using Theorem 9 to find an
unique {\em acceleration} $n_j$ on each
$\CO_j\times[\lambda_j,\infty)$.
\item Passing the estimate of Lyapunov exponents from $y_j$ to $y=0$
for each $(\alpha,t)\in\CO_j$ via maximal {\em acceleration}
$n=\max\{n_1,\cdots,n_l\}$. Then we get uniformly positive Lyapunov
exponents on $\CK$.
\end{enumerate}
By this strategy, it's clear that how we can actually construct a
certain class of parametrized analytic $SL(2,\C)$ cocycles with
uniformly positive Lyapunov exponents.
\subsection{\textbf{Proof of main theorem: Theorem A}}

Now we are ready to prove the Theorem A. Let's do it step by step
as introduced in the beginning of this section.\\

\begin{raggedright}\textit{Step} 1.\end{raggedright}
We start with the polar decomposition of $A=A^{(E,\lambda v)},
E=e^{2\pi it}$. For simplicity, let $z=x+yi$ and $v(z)=r(z)e^{2\pi
ih(z)}=re^{2\pi ih}$, where both $r$ and $h$ are real valued
function. Let $a=a(\lambda,
r)=\lambda(r-r^{-1})+\sqrt{4+[\lambda(r-r^{-1})]^2}$, then obviously
both $r$ and $a$ are uniformly bounded away from $\infty$ and $0$ in
any compact subregion of $\Omega_{\delta}$. A direct computation
shows
$$\|A(z)\|=\sqrt{\frac{2+\lambda^2(r^2+r^{-2})+\lambda(r+r^{-1})
\sqrt{4+[\lambda(r-r^{-1})]^2}}{2(1-\lambda^2)}},$$ which is
uniformly of size $\sqrt{\frac{1}{1-\lambda}}$. In particular, if
$y=0$, then $r=1$ and $\|A(x)\|=\sqrt{\frac{1+\lambda}{1-\lambda}}$
for all $x\in\R/\Z$. Also we get
$$U_2(z)=\frac{1}{\sqrt{a^2+4}}\left(
\begin{array}{cc}
a& \frac{2}{E}e^{-2\pi ih}\\
-2Ee^{2\pi ih}& a
\end{array}
\right).$$ Since it's easy to see that
$$U(z)=U_2(z)^*A(z-\alpha)U_2(z-\alpha)\Lambda(z-\alpha)^{-1},$$we obtain the upper-left coefficient of $U$ is
\begin{center}
$c(z;\alpha,E;\lambda)=\frac{c_1}{\sqrt
E}\{a(z-\alpha)a(z)E+4e^{2\pi
i[h(z-\alpha)-h(z)]}+$\\$\frac{2E\lambda a(z)}{r(z-\alpha)}+2\lambda
r(z-\alpha)a(z-\alpha)e^{2\pi i[h(z-\alpha)-h(z)]}\},$
\end{center} where
$$c_1=\frac{\|A(z-\alpha)\|^{-1}}{\sqrt{(a(z)^2+4)(a(z-\alpha)^2+4)(1-\lambda^2)}}$$
is uniformly bounded away from $\infty$ and $0$ for all $z$ in any
compact subregion of $\Omega_{\delta}$ and all $\lambda\in[0,1]$.\\

\begin{raggedright}\textit{Step} 2.\end{raggedright}
For $\lambda=1$, we have $a(1,r)=2r$, and thus
\begin{align*}c(z;\alpha,E;\lambda)&=4\frac{c_1}{\sqrt E} e^{-2\pi
ih(z)}[r(z-\alpha)+r^{-1}(z-\alpha)][Ev(z)+v(z-\alpha)]\\
&=c_2[Ev(z)+v(z-\alpha)].
\end{align*} where again $|c_2|, |c^{-1}_2|$ are uniformly bounded. Thus we
can reduce the analysis of uniform positivity of $|c(z;\alpha,E;1)|$
to that of $|g(z;\alpha, t)|$ with
$g(z;\alpha,E)=Ev(z)+v(z-\alpha)$. Then we get
$$Ev(z)+v(z-\alpha)=0\Leftrightarrow\theta(z)
-\theta(z-\alpha)=\frac{1}{2}-t-k\alpha+m,$$ where $m$ is any
integer; since we can choose $\delta$ such that $\theta(z)$ is
uniformly bounded on $\Omega_{\delta}$, we only need to care about
finitely many such $m$. Now since $q$ is the largest positive
integer such that $\theta(z+\frac{1}{q})=\theta(z)$ and $\theta$ is
a nonconstant real analytic function, these together imply that

\begin{enumerate}
\item $\theta(z)-\theta(z-\alpha)=\frac{1}{2}-t-k\alpha+m,\ \forall
z\in\Omega_{\delta}$, only if $(\alpha,
t)=(\frac{p}{q},\frac{1}{2}+m-k\frac{p}{q})$, where $p=0,1,\ldots,
q-1$. Obviously, such pair $(\alpha,t)$ is finite, where
$(\alpha,e^{2\pi it})$ is nothing other than our $\CF$ in the main
theorem. Let's also denote $\CF$ as such pairs in $(\alpha,t)$.
\item $\theta(z)-\theta(z-\alpha)=\frac{1}{2}-t-k\alpha+m$, has at most
finitely solutions in $\Omega_{\delta}$ otherwise.\\
\end{enumerate}

\begin{raggedright}\textit{Step} 3.\end{raggedright}
Now let $\CC$ be as in the Theorem A. Then each $(\alpha_j,e^{2\pi
it_j})\in\CC$ satisfying condition (2) above. So for any
$(\alpha_j,e^{2\pi it_j})\in\CC$, we can find some height $y_j$ such
that
$$|c(x+iy_j;\alpha_j,t_j;1)|$$
is bounded away from zero for all $x\in\R/\Z$. Then for each
$(\alpha_j,e^{2\pi it_j})\in\CC$, we can find some connected open
set $\CO_j$ satisfying $(\alpha_j,e^{2\pi
it_j})\in\CO_j\subset(\R/\Z\times\partial\D)\setminus\CF$ and some
large $\lambda_j>0$ such that
$$|c(x+iy_j;\alpha,t;\lambda)|$$
is bounded away from zero for all $(x,\alpha,e^{2\pi
it},\lambda)\in\R/\Z\times\CO_j\times[\lambda_j,1]$. Here we use the
straightforward fact that for fixed $(y_j,\alpha_j,t_j)$, as
$(\alpha,t,\lambda)\rightarrow(\alpha_j,t_j,1)$
$c(x+iy_j;\alpha,t;\lambda)\rightarrow c(x+iy_j;\alpha_j,t_j;1)$ in
$C^0(\R/\Z,\R)$ as function of $x$. On the other hand, we know that
$\|A(x)\|$ is uniformly large and of size
$\sqrt{\frac{1}{1-\lambda}}$.\\

\begin{raggedright}\textit{Step} 4.\end{raggedright}
Now by Lemma 11, without loss of generality, we can assume
$\lambda_j$ is such that there is a constant
$\eta_j=\eta_j(\lambda_j,\CO_j)$ and for each
$(\alpha,E,\lambda)\in\CO_j\times[\lambda_j,1)$, the following two
things hold
\begin{enumerate}
\item $(\alpha, A_{y_j}^{(E,\lambda v)})\in\CU\CH$;
\item $L(\alpha,A_{y_j}^{(E,\lambda v)})\geq-\frac{1}{2}\ln(1-\lambda)+\eta_j$.\\
\end{enumerate}

\begin{raggedright}\textit{Step} 5.\end{raggedright}
Now by compactness of $\CC$, there exist finitely many $j$, say
$j=1,2,\ldots, l$, such that $\CC\subset\bigcup_{1\leq j\leq
l}\CO_j$. Let $\lambda_0=\max\{\lambda_1,\ldots,\lambda_l\}$; by
Theorem 9, there are at most $l$ integers, say $n_j\geq 0$, such
that $\omega(\alpha,A_{y_j}^{(E,\lambda v)})=n_j$ on
$\CO_i\times[\lambda_0,1)$. Here $n_j\geq 0$ follows from the fact
that $L(\alpha,A_y)$ is convex in $y$ and $A^{(E,\lambda v)}\in
SU(1,1)$ when $y=0$. Let $n_0=\max\{n_1,\ldots, n_l\}$ and
$y_0=\max\{y_1,\ldots,y_l\}$.\\

\begin{raggedright}\textit{Step} 6.\end{raggedright}
Now by the definition of \textit{acceleration}, we have the
following estimates: for all $(\alpha,E,\lambda)\in
\CC\times[\lambda_0,1)$, there exists $j$, $1\leq j\leq l$ such that
$$L(\alpha,A^{(E,\lambda v)})\geq L(\alpha,A_{y_j}^{(E,\lambda
v)})-\frac{n_0y_0}{2\pi}\geq-\frac{1}{2}\ln(1-\lambda)+\eta_j-\frac{n_0y_0}{2\pi}.$$
Thus for all $(\alpha,E,\lambda)\in \CC\times[\lambda_0,1)$,
$$L(\alpha,A^{(E,\lambda v)})\geq
-\frac{1}{2}\ln(1-\lambda)+c_0$$ with $c_0=\min\limits_{1\leq j\leq
l}\{\eta_j-\frac{n_0y_0}{2\pi}\}$. Obviously we can change $c_0$ to
let that $$L(\alpha,A^{(E,\lambda v)})\geq
-\frac{1}{2}\ln(1-\lambda)+c_0$$ for all $(\alpha,E,\lambda)\in \CC\times(0,1)$.
This completes the proof of first part of Theorem A.\\

Finally, for $(\alpha, E)\in\CF$, we can compute it directly. In
fact, we've already computed above that for $y=0$,
$\|A(x)\|=\sqrt{\frac{1+\lambda}{1-\lambda}}$. We also have $r=1$
and $a=2$. Thus the above formula for $c(x)$ shows
\begin{align*}
c(x;\alpha,E;\lambda)&=4\frac{c_1}{\sqrt E} e^{-2\pi
ih(x)}(\lambda+1)[Ev(x)+v(x-\alpha)]\\
&=\frac{1}{2\sqrt E}e^{-2\pi
ih(x)}[Ev(x)+v(x-\alpha)]\\
&=c_3[Ev(x)+v(x-\alpha)],
\end{align*}
where $|c_3|, |c_{3}^{-1}|$ is again uniformly bounded.

Now we see to solve the equation $c(x)=0$, one goes back to the case
$\lambda=1$; only here $y$ is fixed to be $0$. First let's note in
the rational case the Lyapunov exponents can be expressed as

$$L(t/s,A)=\frac{1}{s}\int_{\R/\Z}\ln\delta(A_{t/s}(x))dx,$$
where $A_{t/s}(x)=A(x+(s-1)t/s)\cdots A(x)$ and $\delta(B)$ is the
spectral radius of $B$. Namely,
$$\delta(B)=\lim\limits_{n\rightarrow\infty}\|B^n\|^{\frac{1}{n}}
=\inf\limits_{n\geq1}\|B^n\|^{\frac{1}{n}}.$$

Now for $(\alpha,E)\in\CF$, $c(x)=0$ for all $x$ and $\lambda$,
since $\|A(x)\|=\sqrt{\frac{1+\lambda}{1-\lambda}}$ is constant,
it's easy to see $tr[(U\Lambda)_{p/q}(x)]\in\{-2,0,2\}$ for all $x$.
This obviously implies $\delta[(U\Lambda)_{p/q}(x)]=1$. Thus,
$\delta(A_{p/q}(x))=1$ for all $x$, which implies
$$L(\alpha,A^{(E,\lambda v)})=0$$
for all $(\alpha,E,\lambda)\in\CF\times(0,1)$. This completes the
proof of main theorem.

\subsection{\textbf{Recovery of the Schr\"odinger case: proof of Theorem A$'$}}

Without loss of generality, we assume $|v(z)|\leq 1$ on
$\Omega_{\delta}$. Before we do our common steps with section 4.1,
let's first use a simple trick to avoid that $A^{(E-\lambda v)}$ can
always touch $SU(2)$ for $t=\frac{E}{\lambda}\in v(\R/\Z)$, which
leads to the discontinuity of the polar decomposition (this simple
trick is also crucial in the proof of Theorem B$'$, where uniform
largeness of $\|A^{(E-\lambda v)}\|$ is required). We instead
consider $\hat A=TA^{(E-\lambda v))}T^{-1}$, where
$$T=\left(
\begin{array}{cc}
\sqrt{\lambda}^{-1}& 0\\
0& \sqrt{\lambda}
\end{array}
\right).$$ This obviously doesn't change the dynamics. Now let's
carry out our steps.\\

\begin{raggedright}\textit{Step} 1.\end{raggedright}
Now we instead consider the polar decomposition of
$$\hat A^{(\lambda(t-v))}(z)=\left(
\begin{array}{cc}
\lambda(t-v(z))& -\lambda^{-1}\\
\lambda & 0
\end{array}
\right).$$ Let $r(z,t)=t-v(z)$; then $r(z,t)$ is uniformly bounded
on $\Omega_{\delta}\times[-2,2]$. If we set
$$a=a(z,t,\lambda)=|r|^2+1+\frac{1}{\lambda^4}+
\sqrt{(|r|^2+1+\frac{1}{\lambda^4})^2-\frac{4}{\lambda^4}},$$ then
obviously $a$ and $a^{-1}$ are uniformly bounded for all
$(z,t,\lambda)\in\Omega_{\delta}\times[-2,2]\times[\lambda_0,
\infty)$, where $\lambda_0$ is any large positive number. Then a
direct computation shows that $\|\hat
A\|=\lambda\sqrt{\frac{a}{2}}$. Thus $\|\hat A\|$ is uniformly of
size $\lambda$ as $\lambda\rightarrow\infty$. We also have
$$\hat U_2=\frac{1}{\sqrt{(a-\frac{2}{\lambda^4})^2+\frac{4}{\lambda^4}|r(z)|^2}}
\begin{pmatrix}
a-\frac{2}{\lambda^4}& \frac{2}{\lambda^2}\overline{r(z)}\\
-\frac{2}{\lambda^2}r(z)& a-\frac{2}{\lambda^4}
\end{pmatrix}.$$ For simplicity let
$f(z,t,\lambda)=1/\sqrt{(a-\frac{2}{\lambda^4})^2+\frac{4}{\lambda^4}|r(z)|^2}$.
Then we get the corresponding upper-left element of $\hat U$ is
\begin{center}
$\hat c(z;\alpha,
t;\lambda)=c_4\left\{r(z-\alpha)-\frac{2\overline{r(z)}}{\lambda^2a(z)}
+\frac{2r(z-\alpha)}{\lambda^4a(z)}-\frac{4\overline{r(z)}}{\lambda^6
a(z-\alpha)a(z)}\right\},$
\end{center}
where
$$c_4=\sqrt{\frac{2}{a(z-\alpha)}}f(z-\alpha)f(z)a(z-\alpha)a(z)$$
satisfies that $c_4$ and $c_{4}^{-1}$ are uniformly bounded for all
$(z,\alpha,t,\lambda)\in\Omega_{\delta}\times\R/\Z\times[-2,2]\times[\lambda_0,
\infty)$.\\

\begin{raggedright}\textit{Step} 2.\end{raggedright} Let $g(z;\alpha,t;\lambda)=\frac{1}{c_4}\hat
c(z;\alpha,t;\lambda)$; then we can reduce the analysis of uniform
positivity of $|\hat c(z;\alpha,t;\lambda)|$ to that of
$|g(z;\alpha,t;\lambda)|$. Note that
$$g(z;\alpha,t;\infty)=t-v(z-\alpha).$$

\begin{raggedright}\textit{Step} 3.\end{raggedright}
Now by analyticity and non-constancy of $v$, for each $t\in[-2,2]$,
we can pick some height $y_t$ such that
$$\mid t- v(x+iy_t-\alpha)\mid$$ is bounded away from
zero for all $(x,\alpha)\in\R/\Z\times\R/\Z$. Then for fixed $y_t$,
there is a small open interval $I_t$ around $t$ and a large
$\lambda_t>0$ such that
$$|g(x+iy_t;\alpha,s;\lambda)|$$
is bounded away from zero for all
$(x,\alpha,s,\lambda)\in\R/\Z\times\R/\Z\times I_t\times[\lambda_t,
\infty)$. Here we use the obvious fact that for fixed $t$ and $y_t$,
as $(s,\lambda)\rightarrow(t,\infty)$,
$g(x+iy_t;\alpha,s;\lambda)\rightarrow g(x+iy_t;\alpha,t;\infty)$ in
$C^0(\R/\Z\times\R/\Z,\R)$ as function of $(x,\alpha)$.

Now by compactness of $[-2,2]$ we can find finitely many $t$, say
$t_1,\cdots,t_l$, such that
\begin{enumerate}
\item $[-2, 2]\subset\bigcup_j I_{t_j}$, and
\item $|g(x+iy_{t_j};\alpha,s;\lambda)|$
bounded away from zero uniformly for all\\
$(x,s,\lambda,\alpha)\in\R/\Z\times\R/\Z\times I_t\times[\lambda_t,
\infty)$.
\end{enumerate}
This implies $|\hat c(x+iy_j;\alpha,s;\lambda)|$ are uniformly
bounded away from zero for all
$(x,\alpha,s,\lambda)\in\R/\Z\times\R/\Z\times I_t\times[\lambda_t,
\infty)$, and $\|\hat
A(x+iy_{t_j})\|$ is uniformly of size $O(\lambda)$.\\

\begin{raggedright}\textit{Step} 4-\textit{Step} 6\end{raggedright} are the same with that of section
4.1. This addresses the case $(t,\lambda)\in[-2,
2]\times[\lambda_0,\infty)$.\\

On the other hand, condition (2) above concerning the estimates of
$|g(x+iy_{t_j};\alpha,t;\lambda)|$ is automatically satisfied for
all $(t,\lambda)\in(\R\setminus[-2, 2])\times[\lambda_0,\infty)$ for
large $\lambda_0>0$, because $|v(z)|\leq 1$ on $\Omega_{\delta}$.
Since $\|\hat A(x+iy_{t_j})\|$ is uniformly of size $O(\lambda)$, we
can apply Lemma 11 to these parameters simultaneously. Which finish
the proof of Theorem A$'$.

Note all the necessary estimates in Schr\"odinger case are for all
$\alpha\in\R/\Z$, which illustrates the difference between Szeg\H o
and Schr\"odinger cases.

\section{\textbf{Nonuniform hyperbolicy}}

In this section we prove Theorem B and Theorem B$'$ in an unified
way. The main result we are going to use is the main theorem in
\cite{young}. Let's first state it and give some discussion to make
the application easier.

\subsection{\textbf{Young's theorem for nonuniformly hyperbolic $SL(2,\R)$ cocycles}}

Now let $A(\cdot,t)\in C^r(\R/\Z, SL(2,\R))$, $t\in [0,1]$ and
$r\geq 1$, be a one parameter family of cocycle maps which is $C^1$
in $(x, t)$; let $(\alpha, \Lambda(\cdot,t)U(\cdot,t))$ be the
corresponding equivalent systems and $c(x,t)$ is the upper left
element of $U(x,t)$; let

$$B(x,t,\lambda)=\begin{pmatrix}\lambda^{-1}& 0\\0& \lambda\end{pmatrix}\Lambda(x,t)
=\begin{pmatrix}b(x,t,\lambda)&
0\\0&b(x,t,\lambda)^{-1}\end{pmatrix}$$ and
$$\Delta_{\epsilon}(\lambda,\alpha)=\{t:(\alpha, A_t)\in \CN\CU\CH\ with\
L(\alpha, A_t)>(1-\epsilon)\ln\lambda\}.$$ Then the following
theorem is in \cite{young} (see \cite{young}, Theorem 2)

\begin{theorem}
Fixing arbitrary $\epsilon>0$. Let $\alpha$ be a Brjuno number and
$A(\cdot, t)$ as above; assuming $|b(x,t,\lambda)^{\pm1}|,
|\frac{\partial b(x,t,\lambda)}{\partial x}|$ are uniformly bounded
for all $(x,t,\lambda)\in\R/\Z\times[0,1]\times[\lambda_0, \infty)$
for some $\lambda_0$ large. If $c(x,t)$ is such that for each $t$
\begin{enumerate}
\item $\{x:c(x,t)=0\}\neq\varnothing$ and is finite,
\item $x\mapsto c(x,t)$ is transversal to $\{x\equiv 0\}$,
\item $\frac{\partial c}{\partial t}/\frac{\partial c}{\partial x}$
takes different values at different zeros of $c(x,t)$.
\end{enumerate}
Then $Leb(\Delta_{\epsilon}(\lambda,\alpha))\rightarrow 1$ as
$\lambda\rightarrow\infty$.
\end{theorem}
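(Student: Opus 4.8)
The plan is to obtain this theorem by reducing its hypotheses to those of Theorem~2 of \cite{young} through the polar-decomposition normal form of Section~3.2, and then quoting \cite{young}; the nontrivial analysis lives entirely in \cite{young}, so what is needed here is a faithful dictionary between the two sets of assumptions.

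First I would put the family into normal form. Since $b(x,t,\lambda)=\lambda^{-1}\|A(x,t)\|$ and $|b^{\pm1}|$ is uniformly bounded, $\|A(x,t)\|$ is uniformly comparable to $\lambda$, so for $\lambda$ large $A(\cdot,t)$ is $SU(2)$ free and Lemma~10 applies: $(\alpha,A_t)$ is conjugate, by a family $U_3(\cdot,t)$ that is $C^1$ in $(x,t)$, to $(\alpha,\Lambda(\cdot,t)U(\cdot,t))$, and
$$\Lambda(x,t)U(x,t)=\begin{pmatrix}\lambda&0\\0&\lambda^{-1}\end{pmatrix}\,B(x,t,\lambda)\,U(x,t),$$
where $B(x,t,\lambda)U(x,t)=\mathrm{diag}(b,b^{-1})U$ is a $C^1$ family which, by the assumptions on $b$ and $\partial b/\partial x$ together with the uniform $C^1$ control on the rotation $U(x,t)=R_{\phi(x,t)}$, is uniformly bounded in $C^1$ on $\R/\Z\times[0,1]\times[\lambda_0,\infty)$. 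Thus after conjugation the family is exactly a large diagonal factor times a uniformly bounded family, which is the setting of \cite{young}.

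Next I would translate conditions (1)--(3). Writing $c(x,t)=\cos 2\pi\phi(x,t)$, the zero set of $c(\cdot,t)$ is the set of $x$ with $\phi(x,t)\in\tfrac14+\tfrac12\Z$, i.e.\ the points where the rotational factor becomes a quarter turn: condition~(1) asks this ``critical set'' to be nonempty and finite for every $t$; condition~(2), transversality of $c(\cdot,t)$ to the zero section, says these are simple zeros, $\partial\phi/\partial x\ne0$ there; and condition~(3), that $(\partial c/\partial t)/(\partial c/\partial x)=(\partial\phi/\partial t)/(\partial\phi/\partial x)$ takes distinct values at distinct zeros, says the critical points move at distinct speeds as $t$ varies. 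These are precisely the nondegeneracy, transversality and separation hypotheses under which Young runs her Benedicks--Carleson-type parameter-exclusion argument; the one thing to verify is that her scheme is carried out for the diagonal-times-bounded form $\mathrm{diag}(\lambda,\lambda^{-1})\,BU$ rather than only for $\mathrm{diag}(\lambda,\lambda^{-1})\,U$ with $U$ an exact rotation, which it is because $B$ is diagonal with $\|B^{\pm1}\|$ and $\|\partial_xB^{\pm1}\|$ uniformly bounded. With the dictionary in place, Theorem~2 of \cite{young} gives, for Brjuno $\alpha$, that the set of $t\in[0,1]$ with $(\alpha,A_t)\in\CN\CU\CH$ and $L(\alpha,A_t)>(1-\epsilon)\ln\lambda$ has Lebesgue measure tending to $1$ as $\lambda\to\infty$; the $\CN\CU\CH$ qualifier in the definition of $\Delta_\epsilon$ matches \cite{young} and is natural here, since condition~(1) forces a critical point for every $t$, so Lemma~11 does not directly apply and the generic behaviour on the exceptional parameters is nonuniform rather than uniform hyperbolicity.

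The main obstacle is not a new estimate but this transcription: one must check that conditions (1)--(3), stated for the entry $c(x,t)$ of the \emph{actual} $U$ produced by decomposing the given family (which already encodes the contribution of $B$ and of all parameters), coincide with the geometric conditions on the folding map used in \cite{young}, and that the argument of \cite{young} indeed tolerates the bounded non-rotational factor $B$. Everything else is bookkeeping once Section~3.2 supplies the normal form.
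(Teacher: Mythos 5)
Your proposal is correct and is essentially the paper's own treatment: the paper likewise proves nothing new at this point, but quotes Theorem 2 of \cite{young} and supplies the dictionary between conditions (1)--(3), stated for the upper-left entry $c(x,t)=\cos\varphi(x,t)$ of the rotation factor, and Young's hypotheses, which are phrased there via $\beta(x,t)=(\overline{BU}(x,t))^{-1}(\tfrac{\pi}{2})=\tfrac{\pi}{2}-\varphi(x,t)+m\pi$, noting also that the bounded diagonal factor $\mathrm{diag}(b,b^{-1})$ (equivalently, replacing $c$ by $fc$ with $f^{\pm1},\partial_x f$ bounded) stays within Young's setting. Your transcription of (1)--(3) and of the $\CN\CU\CH$ qualifier is the same argument in slightly different words.
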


Conditions (1)-(3) are not exactly these in \cite{young}, but it's
not difficult to see the equivalence. Indeed, in \cite{young} the
author identifies $\R\PP^1=\R\cup\{\infty\}$ with $\R/(\pi\Z)$,
where the M\"obius transformation of $BU$ on $\R\PP^1$ is conjuagted
to be $\overline{BU}:\R/(\pi\Z)\rightarrow\R/(\pi\Z)$ via the
commutative diagram
\[
\xymatrix{
\R/(\pi\Z)\ar[d]_{\cot}\ar[r]^{BU}&\R/(\pi\Z)\ar[d]^{\cot}\\
\R\cup\{\infty\}\ar[r]^{BU\cdot}&\R\cup\{\infty\}}
\]
Then she considers the function
$\beta(x,t)=(\overline{(BU)}(x,t))^{-1}(\frac{\pi}{2})$; then
condition (1)-(3) is stated in terms of $\beta(x,t)$ in her main
theorem. If we let $c(x,t)=\cos(\varphi(x,t))$, then the relation
between $\beta(x,t)$ and $c(x,t)$ is
$$\beta(x,t)=\frac{\pi}{2}-\varphi(x,t)+m\pi.$$
Then it's easy to see the equivalence of these conditions in
$c(x,t)$ and in $\beta(x,t)$. In fact, the equivalent conditions can
be stated using any of the following functions:
\begin{center}
$\beta(x,t)$, $\tan(\beta(x,t))=\cot(\varphi(x,t))$,
$c(x,t)=\cos(\varphi(x,t))$ and $\varphi(x,t)-\frac{\pi}{2}+m\pi$.
\end{center}
Furthermore, $c(x,t)=\cos(\varphi(x,t))$ can also be replaced by
$f(x,t)c(x,t)$, where $f(x,t)$ satisfying that
$|f(x,t,\lambda)^{\pm1}|, |\frac{\partial f(x,t,\lambda)}{\partial
x}|$ is uniformly bounded for all
$(x,t,\lambda)\in\R/\Z\times[0,1]\times[\lambda_0, \infty)$.

Note condition (2) in Theorem 12 doesn't imply the finiteness of
$\{x:c(x,t)=0\}$ since we have functions like
$x^3\cos(\frac{1}{x})$. There is no particular reason we should use
$t\in [0,1]$. In fact, $[0,1]$ can be replaced by any connected
interval and the result still holds (this also implies conditions
(1)-(3) can be violated at finitely many $t$).

\subsection{\textbf{Proof of Theorem B}}

What we need to consider $\hat{A}=Q^*A^{(t,\lambda v)}Q\in
SL(2,\R)$. By the definition of $\hat A$ and the fact that
$Q\in\U(2)$, we have $\|\hat
A(x)\|=\sqrt{\frac{1+\lambda}{1-\lambda}}$ (this implies the
corresponding $b(x,t)$ in Theorem 12 is constantly 1, which
obviously satisfies the conditions) and the corresponding $\hat
U_2(x)=Q^*U_2(x)P(x)\in SO(2)$, where
$$P=\begin{pmatrix}
\frac{e^{\frac{\pi i}{4}}}{\sqrt{Ev(x)}}& 0\\
0& \sqrt{Ev(x)}e^{-\frac{\pi i}{4}}
\end{pmatrix}.$$ Let $(\alpha,\Lambda\hat U)$ be the corresponding equivalent
system with $\hat U\in SO(2)$. Then a direct computation shows that
the corresponding $\hat c(x,t)$ of $\hat U(x)$ is

$$\hat c(x;\alpha,t)=\cos\pi[\theta(x)-\theta(x-\alpha)+k\alpha+t].$$
Hence by the discussion following Theorem 12, we can instead
consider
$$\varphi(x,t,\alpha)=\theta(x)-\theta(x-\alpha)+k\alpha+t+m-\frac{1}{2}$$

Let $f(x)=\theta(x-\alpha)-\theta(x)-\frac{1}{2}-k\alpha$. We set
for each Brjuno $\alpha$, $I_{\alpha}=f(\R/\Z)\cap\R/\Z$. Then we
can show $\varphi(x,t,\alpha)$ satisfies conditions (1)-(3) in
Theorem 12 for each $t\in I_\alpha$. Indeed, (1) is obviously
satisfied for each $t\in I_{\alpha}$ by our choice of $\theta(x)$.
For (2), we note for each irrational $\alpha$, $\frac{\partial
\varphi}{\partial x}(x,t,\alpha)=\theta'(x)-\theta'(x-\alpha)=0$ has
only two solutions which are independent of $t$. Thus, except
finitely many $t$, (2) is satisfied for each $t\in I_{\alpha}$. For
(3), we note $\frac{\partial \varphi}{\partial t}\equiv 1$ is
nonzero and independent of $x$; thus we only need that
$\frac{\partial \varphi}{\partial x}$ takes different values for
different zeros of $\varphi(x,t)$, which again is equivalent to that
$\theta'(x)-\theta'(x-\alpha)$ takes on distinct values. But we know
for irrational $\alpha$, $\theta'(x)-\theta'(x-\alpha)=0$ exactly at
two points; furthermore we also assumed
$Leb[\theta(\R/\Z)]\leq\frac{1}{2}$, which implies
$Leb[(\theta(\cdot)-\theta(\cdot-\alpha)](\R/\Z))\leq 1$. These
together imply that $\theta'(x)-\theta'(x-\alpha)$ takes different
values at level sets of $\theta(x)-\theta(x-\alpha)$.

Now set $\CK=\overline{\cup_{\alpha}I_{\alpha}}$ (which is obviously
a compact region in $\R/\Z\times\R/\Z$) and let
$s(\alpha,\lambda)=Leb(I_{\alpha}\cap\Delta_{\epsilon}(\lambda,\alpha))$;
then Theorem 12 implies for $Lebsgue$ almost every $\alpha$,
$s(\alpha,\lambda)\rightarrow Leb(I_{\alpha})$ as
$\lambda\rightarrow 1$. Hence, Bounded Convergence Theorem implies
$$Leb(\CK\cap\Gamma_{\epsilon}(\lambda))=\int_{\R/\Z}s(\alpha,\lambda)d\alpha\rightarrow
\int_{\R/\Z}Leb(I_{\alpha})d\alpha=Leb(\CK)$$ as $\lambda\rightarrow
1$. This completes the proof.

\begin{remark}
By the proof of the main theorem, it's easy to see that any region
in $\R/\Z\times\R/\Z$ which is away from $\CK$ with a positive
distance is in $\CU\CH$ as $\lambda\rightarrow 1$. On the other
hand, obviously we have $Leb(I_{\alpha})\rightarrow 0$ as
$\alpha\rightarrow 0$. If we take $v(x)=e^{\pi i\cos(2\pi x)}$, then
$Leb(I_{\alpha})\rightarrow 1$ as $\alpha\rightarrow\frac{1}{2}$.
Thus $Leb(\Sigma)\rightarrow 1$ as $\lambda\rightarrow 1$ and Brjuno
$\alpha\rightarrow\frac{1}{2}$. Namely, we've constructed some
analytic quasiperiodic 2-sided Verblunsky coefficients, of which the
associated $\mu_x$ satisfying that $supp(\mu_x)$ can be arbitrarily
close to full measure.
\end{remark}

\subsection{\textbf{Proof of Theorem B$'$}}

For the proof of Theorem B$'$ we continue to use the polar
decomposition of proof of Theorem A$'$ in Section 4.2. Note here we
assume $v(\R/\Z)=[0,1]$ and we restrict to $y=0$.

Recall we have $\|\hat A\|=\lambda\sqrt{\frac{a}{2}}$, where for
$t=\frac{E}{\lambda}$ and $r(x)=t-v(x)$,
$$a=a(x,t,\lambda)=r(x)^2+1+\frac{1}{\lambda^4}+
\sqrt{(r(x)^2+1+\frac{1}{\lambda^4})^2-\frac{4}{\lambda^4}}$$
satisfies that $a$ and $a^{-1}$ are uniformly bounded for all
$(x,t,\lambda)\in\R/\Z\times[0,1]\times[\lambda_0, \infty]$. Thus
the corresponding $\hat b(x,t,\lambda)=\sqrt{\frac{a}{2}}$ satisfies
all the conditions in Theorem 12. We also have
$$\hat
c(x;\alpha,t;\lambda)=c_4\left\{r(x-\alpha)-\frac{2r(x)}{\lambda^2a(x)}
+\frac{2r(x-\alpha)}{\lambda^4a(x)}-\frac{4r(x)}{\lambda^6
a(x-\alpha)a(x)}\right\},$$ where
$$c_4=\sqrt{\frac{2}{a(x-\alpha)}}f(x-\alpha)f(x)a(x-\alpha)a(x)$$
and
$f(x,t,\lambda)=1/\sqrt{(a-\frac{2}{\lambda^4})^2+\frac{4}{\lambda^4}r(x)^2}$.
Hence, we have $$\hat
c(x;\alpha,t,\infty)=\frac{t-v(x-\alpha)}{\sqrt{(t-v(x-\alpha))^2+1}}.$$
Furthermore it's not difficult to see that for any fixed $\alpha$,
\begin{center}
$\hat c(x;\alpha,t;\lambda)\rightarrow\hat c(x;\alpha,t;\infty)$ in
$C^1(\R/\Z\times[0,1],\R)$ as $\lambda\rightarrow\infty$.
\end{center}
Indeed, it's easy to see this reduces to the convergence of
$a(x,t,\lambda)$ to $a(x,t,\infty)$ in $C^1$ topology, which is
immediate.

Now by the discussion following Theorem 12, conditions (1)-(3) for
$\hat c(x,t)$ in Theorem 12 can be reduced to these for
$t-v(x-\alpha)$ for sufficiently large $\lambda$, which are
immediate by our assumption on $v(x)$ in Theorem B$'$. Indeed,
outside of a finite set of $t$ containing critical values of $v$,
assumption (1) implies condition (1)-(2) of Theorem 15 and
assumption (2) is the same with condition (3).

By exactly the same proof of Theorem B, it follows that

\begin{center}
$Leb((\R/\Z\times[0,1])\cap\Gamma_{\epsilon}(\lambda))\rightarrow 1$
as $\lambda\rightarrow \infty$.
\end{center}
This completes the proof of Theorem B$'$.

\begin{remark}
In fact, the proof of Theorem 12 in \cite{young} contains more
information: for each $\alpha$ Brjuno and
$t\in\Delta_{\epsilon}(\lambda,\alpha)$, there are finitely many
$x_i\in\R/\Z$, say $i=1,\cdots, l$, along whose orbits there are
sequences $\{\|A_{n}^{(E,\lambda v)}(x_i)w\|\}_{n\in\Z}$ (for some
$w\in\R^2\setminus\{0\}$) which decay exponentially as
$n\rightarrow\pm\infty$. Namely, let
$E\in\lambda\Delta_{\epsilon}(\lambda,\alpha)$ and $x_i\in\R/\Z$ be
critical points of $(\alpha,A^{(E-\lambda v)})$; then $E$ is an
eigenvalue of operator $H_{\alpha,\lambda,x_i}$ with exponentially
decay eigenfunction. This is the so-called Anderson Localization.
$\{x_1, \cdots, x_l\}$ are the so-called `critical set', the
existence of which leads to nonuniform hyperbolicity. Another fact
that we can get from the proof of Theorem 12 is that in our
Schr\"odinger cocycle setting, both `initial critical set' (zeros of
$\hat c(x;\alpha,t)$) and `critical set' converges to zeros of
$t-v(x-\alpha)$ as $\lambda\rightarrow\infty$. This gives an
explicit description of number of `critical points'. We can relax
our conditions on $v(x)$ to get weak results. Namely any $C^1$
potential function $v$ with an interval $I$ in its image satisfying
condition (1)-(3) can be our choice (for example, we can relax
$v(x)$ to have an unique minimum or maximum and pick a small
interval $I$ of $t$ near this minimal or maximal value). This method
obviously allows us to produce cocycles $(\alpha,A^{E-\lambda v})$
to have $2n$ `critical points', where $n\geq1$ can be any natural
number.

There is another theorem in \cite{young} (see \cite{young}, Theorem
1) taking frequency $\alpha$ as parameter, the proof of which is
basically the same with that of Theorem 12. Combining this theorem
with sard theorem and our decomposition procedure, it's easy to see
that if we fix arbitrarily $C^1$ potential $v$ and arbitrary
$\epsilon>0$, then for almost every $E\in \lambda v(\R/\Z)$,
$$\lim\limits_{\lambda\rightarrow\infty}Leb\{\alpha: (\alpha,A^{(E-\lambda v)})
\in \CN\CU\CH\ \mbox{and}\ L(\alpha,A^{(E-\lambda
v)})>(1-\epsilon)\ln\lambda\}=1.\footnote{The author is grateful to
Anton Gorodetski and Vadim Kaloshin for showing me their notes,
where they pointed this out.}$$
\end{remark}

\section{\textbf{Discussion}}
In Schr\"odinger case, under the condition $|v|\leq1$ on
$\Omega_{\delta}$, a little bit more computation shows that for any
fixed $\lambda>1$, $(\alpha,A^{(E-\lambda v)})$ satisfies all the
conditions in Lemma 11 for all $E\in\R\setminus[-3-\lambda,
3+\lambda]$ (In the case of Theorem B$'$, this also implies that
$\{E\in\R:L(E)>(1-\epsilon)\ln\lambda\}$ tends to be full measure
set as $\lambda\rightarrow\infty$). The \textit{uniform
hyperbolicity} result implied by Lemma 11 is nothing new. Because
$\R\setminus[-2-\lambda,2+\lambda]$ is in the resolvent set and we
have the basic fact related \textit{uniform hyperbolicity} and
resolvent set. The new fact is that by Lemma 11, for $E$ and
$\lambda$ in the case above, $(\alpha, A^{(E-\lambda v)})$ admits an
invariant cone field such that for each vector in this cone, it's
expanded under forward iteration on each step. This is
\textit{uniform hyperbolicity} in some strong sense and is not true
for $E$ in the spectral gap.

More concretely, as in the proof of Theorem 9, $(\alpha,
A^{(E-\lambda v)})\in\CU\CH$ can be analytically conjugated to a
diagonal system
$$\left(\alpha, \left(
\begin{array}{cc}
r(x)& 0\\
0&r(x)^{-1}
\end{array}
\right)\right)$$ via its stable and unstable direction (where we
assume $r(x)$ corresponds to unstable direction). Then for
$(E,\lambda)$ satisfying conditions in Lemma 11, we have $|r(x)|\geq
c>1$ for all $x\in\R/\Z$. We emphasize here that the invariant cone
field are for the original system $(\alpha, A^{(E-\lambda v)})$.
Because for $\alpha$ and $v\in C_{\delta}^{\omega}(\R/\Z,\R)$
satisfying
$$\delta>\beta(\alpha)=\limsup\limits_{n\rightarrow\infty}\frac{\ln q_{n+1}}{q_n}$$
(where $\frac{p_n}{q_n}$ is the continued fraction approximants of
$\alpha$), we can always analytically conjugate the diagonal system
to the constant system

$$\left(\alpha, \left(
\begin{array}{cc}
e^{L(\alpha,A^{(E-\lambda v)})}& 0\\
0&e^{-L(\alpha,A^{(E-\lambda v)})}
\end{array}
\right)\right)$$ by solving a simple cohomological equation: $$\ln
|b(x+\alpha)|-\ln|b(x)|=\ln|r(x)|-L(\alpha,A^{(E-\lambda v)}).$$

Note that in both cases, for large couplings, one can always choose
suitable height $y$ such that $(\alpha, A_y)$ is \textit{uniformly
hyperbolic}. This fact reflects another important theorem in
\cite{avila2}. Recall that for irrational $\alpha$, $y\mapsto
L(\alpha,A_y)$ is a piecewise affine function in $y$, thus it
natural to give the following definition

\begin{definition}
We say that $(\alpha,A)\in ((\R/\Z)\setminus\Q)\times
C^{\omega}(\R/\Z,SL(2,\C))$ is $regular$ if $L(\alpha,A_y)$ is
affine for $y$ in a neighborhood of 0.
\end{definition}

Then the theorem in \cite{avila2} (see \cite{avila2}, Theorem 6) is
\begin{theorem}
Assume that $L(\alpha,A)>0$. Then $(\alpha, A)$ is regular if and
only if $(\alpha, A)$ is uniformly hyperbolic.
\end{theorem}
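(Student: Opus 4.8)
The plan is to prove the two implications separately. The implication ``uniformly hyperbolic $\Rightarrow$ regular'' is short and is essentially contained in the computation already made in the proof of Theorem~9; the converse is the substantial part, which I would handle by contraposition.

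\emph{Uniformly hyperbolic implies regular.} Assume $(\alpha,A)\in\CU\CH$. By openness of $\CU\CH$ and analyticity, the stable and unstable directions extend to holomorphic $\C\PP^1$-valued maps on a strip, so, exactly as in the proof of Theorem~9, there is an analytic $B$ on $\Omega_{\delta'}$ with $B(x+\alpha)^{-1}A(x)B(x)=\begin{pmatrix} r(x)&0\\ 0&r(x)^{-1}\end{pmatrix}$ and $r$ holomorphic, non-vanishing on $\Omega_{\delta'}$. Then for $|y|<\delta'$,
$$L(\alpha,A_y)=\int_{\R/\Z}\Re\ln r(x+iy)\,dx.$$
Since $\Re\ln r$ is harmonic and $r$ is periodic in $x$, the Cauchy--Riemann equations show that $\frac{d}{dy}$ of the right-hand side is constant, equal to $2\pi$ times the winding number of $r^{-1}$ (cf.\ the proof of Theorem~9); hence $L(\alpha,A_y)$ is affine on $(-\delta',\delta')$ and $(\alpha,A)$ is regular.

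\emph{Regular with $L>0$ implies uniformly hyperbolic.} Equivalently: if $L(\alpha,A)>0$ and $(\alpha,A)\notin\CU\CH$, then $y\mapsto L(\alpha,A_y)$ has a corner at $0$. Since $L(\alpha,A)>0$, continuity of the Lyapunov exponent gives $\delta'>0$ with $L(\alpha,A_y)>0$ for $|y|<\delta'$; on this interval $y\mapsto L(\alpha,A_y)$ is convex (as recalled in Section~3.1) and has integer acceleration at every point (Theorems~8 and 9), hence is piecewise affine with slopes in $2\pi\Z$, and regularity is precisely the statement that its left and right slopes at $0$ agree. Now I would use the characterization behind Theorem~9 (Avila's Lemma~10): $(\alpha,A)\in\CU\CH$ if and only if its invariant directions are boundary values of $\C\PP^1$-valued maps holomorphic on a genuine strip. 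Since $L(\alpha,A)>0$, there is in any case a measurable Oseledets splitting at $y=0$, with the stable direction $s(x)$ realized as the limit of the ``most contracted'' directions of $A_n(x)$; the point is to show that this limit is in fact uniform in $x$ and depends holomorphically on $z=x+iy$ near $y=0$, which produces the required holomorphic section. The mechanism tying this to the absence of a corner is the subharmonicity of $\frac1n\ln\|A_n(x+iy)\|$ together with the convexity and quantization of $y\mapsto L(\alpha,A_y)$: when the limiting period-average is affine (no corner), the exceptional large-deviation set, which is the only obstruction to uniform convergence of these projective approximants, must be negligible on a strip, forcing continuity of $s$ (hence of $u$), hence uniform hyperbolicity.

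\emph{Main obstacle.} The crux is exactly this last step --- upgrading the a.e.-defined Oseledets section at $y=0$ to a continuous (indeed holomorphic-on-a-strip) invariant section using only that $L(\alpha,A_y)$ has no corner at $0$. This is where Avila's finer machinery is essential: the quantization of the acceleration (Theorem~8), the piecewise-affine structure of $y\mapsto L(\alpha,A_y)$, and the quantitative comparison of $\frac1n\ln\|A_n(x+iy)\|$ with its average over $x\in\R/\Z$. One could alternatively try to build an invariant cone field for the complexified cocycle and invoke Lemma~11, but arranging the hypotheses of Lemma~11 uniformly on a strip requires precisely the structural input --- a uniform gap coming from the affine Lyapunov exponent --- that constitutes the difficulty. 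Everything else --- the easy direction, and deducing $\CU\CH$ once a holomorphic invariant section is in hand --- is routine given the results already established in the paper.
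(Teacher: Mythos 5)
Your forward implication is fine: for $(\alpha,A)\in\CU\CH$ the invariant directions are analytic, the cocycle is analytically diagonalized on a strip, and averaging the harmonic function $\Re\ln r(x+iy)$ over $x$ gives a function of $y$ whose derivative is the (constant) winding number of $r^{-1}$ times $2\pi$; equivalently, one can argue directly from Theorem 9 of this paper, since $\CU\CH$ is open and the acceleration is locally constant on $\CU\CH_{\omega}$, so $y\mapsto L(\alpha,A_y)$ is affine near $0$. But for the converse — the actual content of the statement — your proposal contains no proof. The passage from ``$y\mapsto L(\alpha,A_y)$ has no corner at $0$ and $L(\alpha,A)>0$'' to the existence of a \emph{continuous} (indeed holomorphic-on-a-strip) invariant splitting is exactly the theorem; your sketch (``the exceptional large-deviation set, which is the only obstruction to uniform convergence of these projective approximants, must be negligible on a strip, forcing continuity of $s$'') names the desired conclusion rather than deriving it, and you explicitly defer the crux to ``Avila's finer machinery.'' Nothing in the tools developed in this paper (Theorem 9, Lemma 11) closes that step: Lemma 11 needs a uniform cone/norm hypothesis which is precisely what regularity alone does not obviously provide at $y=0$, and Theorem 9 only gives information once uniform hyperbolicity is already known.

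For calibration, note that the paper itself does not prove this statement: it is quoted verbatim from Avila's global theory paper ([A2], Theorem 6), where the hard direction is established using the quantization of acceleration together with a genuinely nontrivial argument comparing $\frac1n\ln\|A_n(x+iy)\|$ with its phase average on a strip. So your easy direction is correct and consistent with the paper's Theorem 9, but the ``regular $\Rightarrow$ uniformly hyperbolic'' half remains a genuine gap in your write-up; as it stands you have reduced the theorem to itself (or to a citation), not proved it.
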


Thus if a priori we know $L(\alpha,A)>0$, then for each nonzero
height $y$ sufficiently close to $0$, $(\alpha,A_y)$ is always
\textit{uniformly hyperbolic}.

In both cases, the essential obstruction is that $c(x;\alpha,t)$
oscillates around $0$, which forces us to consider the real analytic
case and choose suitable heights to get uniformly positive Lyapunov
exponents.

On the other hand, we know for some $t$ and
$\{x:c(x;\alpha,t)=0\}\neq\varnothing$, we can still have uniformly
hyperbolic systems. For example, these $E$ in the spectral gap of
Almost Mathieu operators ($v(x)=2\cos(2\pi x)$) with $\lambda>2$ can
be our choice (see \cite{avilajitomirskaya}, Main Theorem). Thus it
will be very interesting to understand that, under the condition
$\{x:c(x,t)=0\}\neq\varnothing$ and uniformly positive Lyapunov
exponents, how can one go between {\it uniform hyperbolicity} and
{\it nonuniform hyperbolicity} when $t$ varies.

The proof of Theorem 12 implies that for in the case of Theorem
B$'$, it's exactly these $t$, near which resonance occurs or near
critical value of $v$, that have been excluded. Here, for example,
since the induction step gets started at the continued fractional
approximant $q_N$ for some large $N$, resonance at initial step
means that there exist some $x_0$ and $1\leq k<q_N$ such that
$$t=v(x_0)\mbox{ and }|v(x_0)-v(x_0+k\alpha)|<<\frac{1}{q_N^2}.$$ Thus the
natural thing to do next is to study these $t$ and do the following
possible generalization: for a fixed {\em Diophantine} frequency,
put some additional conditions on $v$, like higher but finite
regularity (for example, $C^3$) and nondegeneracy of critical points
(i.e. $v''(x_0)\neq0$ where $v'(x_0)=0$) to get the positive
Lyapunov exponents for all $E$ for sufficient large couplings (the
difference between this result and that in \cite{chan} is the
following: here one try to fix frequency and potential while in
\cite{chan} the author eliminates frequencies and varies
potentials). As we stated in Section 1.2.3, a new induction step is
needed to take care of appearance and disappearance of `critical
points' near resonance. One can even try to prove Anderson
Localization (AL) for almost every phase or try to produce
counterexamples such that AL does not hold. Similar problems are
also proposed in \cite{klein}.

\end{document}